\title{Many disjoint edges in topological graphs}
\date{}
\author{Andres J. Ruiz-Vargas \\ \'Ecole Polytechnique F\'ed\'erale de Lausanne
        \and Richard Row, \LaTeX\ Academy}
        \author{Andres J. Ruiz-Vargas%
\thanks{Research partially supported by Swiss National Science Foundation grant 200021-137574 and Swiss National
Science Foundation grant 200020-144531 and by Hungarian Science Foundation EuroGIGA Grant OTKA NN 102029.}}%
\newtheorem{theorem}{Theorem}
\newtheorem{claim}[theorem]{Claim}
\newtheorem{lemma}[theorem]{Lemma}
\newtheorem{observation}[theorem]{Observation}
\theoremstyle{definition}
\newtheorem{definition}[theorem]{Definition}
\newtheorem*{remark}{Remark}
\newcommand {\below} {\uparrow}
\newcommand {\ab} {\downarrow}
\newcommand{\comment}[1]{}
\begin{document}
\author{Andres J. Ruiz-Vargas \thanks{Research partially supported by Swiss National Science Foundation grant 200021-137574 and Swiss National
Science Foundation grant 200020-144531 and by Hungarian Science Foundation EuroGIGA Grant OTKA NN 102029.} \\ \'Ecole Polytechnique F\'ed\'erale de Lausanne }
\maketitle

\begin{abstract}
A \emph{monotone cylindrical} graph is a topological graph drawn on an open cylinder with an infinite vertical axis satisfying the condition that every vertical line intersects every edge at most once. It is called \emph{simple} if any pair of its edges have at most one point in common: an endpoint or a point at which they properly cross. We say that two edges are \emph{disjoint} if they do not intersect. We show that every simple complete monotone cylindrical graph on $n$ vertices contains $\Omega(n^{1-\epsilon})$ pairwise disjoint edges for any $\epsilon>0$. As a consequence, we show that every simple complete topological graph (drawn in the plane) with $n$ vertices contains  $\Omega(n^{\frac 12-\epsilon})$ pairwise disjoint edges for any $\epsilon>0$. This improves the previous lower bound of $\Omega(n^\frac 13)$ by Suk which was reproved by Fulek and Ruiz-Vargas. We remark that our proof implies a polynomial time algorithm for finding this set of pairwise disjoint edges.

\end{abstract}

\newpage

\section{Introduction}
A \emph{topological graph} is a graph drawn on the plane so that its vertices are represented by points and its edges are represented by Jordan arcs connecting the respective endpoints. Moreover, in topological graphs we do not allow overlapping edges or edges passing through a vertex.  A \emph{topological graph} is \emph{simple} if every pair of its edges meet at most once, either in a common vertex or at a proper crossing.
We use the words ``vertex'' and ``edge'' in both contexts, when referring to the elements of an abstract graph and also when referring to their corresponding drawings.
A graph is \emph{complete} if there is an edge between every pair of vertices. We say that two edges are \emph{disjoint} if they do not intersect. Throughout this note $n$ denotes the number of vertices in a  graph.

By applying a theorem of Erd\H os and Hajnal \cite{Erdoshajnal}, every complete $n$-vertex simple topological graph contains $e^{\Omega(\sqrt{\log n})}$ edges that are either pairwise disjoint or pairwise crossing. However, it was thought \cite{pach2003unavoidable} that this bound is far from optimal. Fox and Pach \cite{fox2008coloring} showed that there exists a constant $\delta>0$ such that every complete $n$-vertex
simple topological graph contains $\Omega(n^\delta)$ pairwise crossing edges. In 2003, Pach, Solymosi, and T\'oth \cite{pach2003unavoidable} showed that every complete $n$-vertex simple topological graph has at least $\Omega(\log^{1/8}n)$ pairwise disjoint edges. This lower bound was later improved by Pach and T\'oth \cite{PT05} to $\Omega(\log n/ \log \log n)$. Fox and Sudakov \cite{FS09} improved this to $\Omega(\log^{1+\epsilon} n)$, where $\epsilon$ is a very small constant. Furthermore, the previous two bounds hold for dense simple topological graphs. Pach and T\'oth conjectured (see Problem 5, Chapter 9.5 in \cite{BMP05}) that there exists a constant $\delta>0$ such that every complete $n$-vertex simple topological graph has at least $\Omega(n^\delta)$ pairwise disjoint edges. Using the existence of a perfect matching with a low stabbing number
for set systems with polynomially bounded dual shattered function~\cite{CW89}, Suk \cite{S12} settled this conjecture by showing that there are always at least $\Omega(n^{1/3})$ pairwise disjoint edges. This was later reproved by Fulek and Ruiz-Vargas \cite{socgfrv} using completely different techniques.  We are now able to improve the lower bound to $\Omega(n^{1/2-\epsilon})$ for any $\epsilon>0$.

\begin{theorem}\label{main}
A complete simple topological graph on $n$ vertices, which is drawn on the plane,  contains $\Omega(n^{\frac 12-\epsilon})$
pairwise disjoint edges.
\end{theorem}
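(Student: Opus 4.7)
The plan is to deduce Theorem~\ref{main} from the main technical result announced in the abstract, which produces $\Omega(N^{1-\epsilon})$ pairwise disjoint edges in every simple complete monotone cylindrical graph on $N$ vertices. If, inside any simple complete planar topological graph on $n$ vertices, I can locate a complete sub-drawing on $N=\Theta(\sqrt{n})$ vertices that is (after a homeomorphism) a simple complete monotone cylindrical graph, then the cylindrical bound immediately yields $\Omega(n^{1/2-\epsilon})$ pairwise disjoint edges, for any $\epsilon>0$. The entire proof therefore reduces to constructing this embedded sub-drawing while losing only a square-root factor in the number of vertices.

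\textbf{Setting up the cylinder.} To produce this sub-drawing, I would fix two vertices $u,v$ together with the Jordan arc $\alpha$ representing the edge $uv$. Puncturing $\R^2$ at $u$ and $v$ turns the plane into an open topological cylinder whose two ends are $u$ and $v$ and whose vertical axis can be identified with $\alpha$ (regarded as a generator). Every other vertex $w$ lies in the open cylinder, and the two spokes $uw,vw$ emanate from the two ends of the vertical axis. A homeomorphism of the cylinder that simultaneously straightens the rotations at $u$ and $v$ turns every spoke into a vertically monotone arc, so the spokes alone already form a monotone cylindrical graph on $n-2$ vertices. What remains is to verify that the edges $ww'$ among the other vertices can also be made monotone, i.e.\ cross each horizontal circle at most once.

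\textbf{Main obstacle and the $\sqrt{n}$ loss.} The heart of the argument, and the source of the square-root loss, will be selecting a large subset $S\subseteq V\setminus\{u,v\}$ on which \emph{every} induced edge is monotone in the cylindrical picture. The approach I expect to follow is to attach to each vertex $w$ a combinatorial ``type'' encoding how the edges incident to $w$ wind around the cylinder relative to $\alpha$ and the horizontal foliation, and then apply a Ramsey- or Dilworth-style selection argument on the resulting palette of types to extract $S$. The main difficulty is that one must simultaneously tame the winding behaviour of a quadratic number of edges using only linearly much information per vertex, and the mismatch between $n$ vertices and $\binom{n}{2}$ edges is, I believe, exactly what limits the reduction to $|S|=\Omega(\sqrt{n})$ and pins the final exponent at $1/2$. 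Once $S$ is obtained, the induced sub-drawing is a simple complete monotone cylindrical graph on $\Omega(\sqrt{n})$ vertices, and applying the cylindrical theorem completes the proof of Theorem~\ref{main}.
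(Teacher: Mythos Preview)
Your high-level plan---reduce to the cylindrical setting and then invoke the $\Omega(N^{1-\epsilon})$ bound there---is exactly the architecture the paper uses. However, the reduction you sketch is not the one the paper carries out, and the gap in your version is real. The paper does \emph{not} show that every complete simple topological graph on $n$ vertices contains a complete monotone simple cylindrical sub-drawing on $\Theta(\sqrt{n})$ vertices. Instead it invokes a black-box result of Fulek (Theorem~\ref{mainlemma}): for every such $G$ there is some $\Delta=\Delta(G)$ with $0<\Delta<n$ so that $G$ already contains $\max\{n/\Delta,\,c(\Delta)\}$ pairwise disjoint edges, where $c(\cdot)$ is the extremal function for the cylindrical problem. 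One then simply balances: if $\Delta\le n^{1/2}$ the term $n/\Delta$ wins, and if $\Delta>n^{1/2}$ the cylindrical bound $c(\Delta)=\Omega(\Delta^{1-\epsilon})$ wins. Crucially, the $n/\Delta$ disjoint edges in the first regime are \emph{not} obtained via the cylindrical theorem at all; they come from a separate mechanism inside Fulek's reduction.

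Your proposal collapses this dichotomy into the single assertion that one can always find $\Omega(\sqrt{n})$ vertices inducing a monotone cylindrical drawing, and the ``Ramsey/Dilworth on winding types'' step meant to justify it is not an argument---you give no definition of the types, no partial order, and no reason the selection should cost only a square root rather than, say, a logarithm or a polynomial of unspecified degree. Even the preliminary claim that a single homeomorphism simultaneously straightens the two stars at $u$ and $v$ into vertically monotone arcs is nontrivial (it is essentially part of what Fulek's reduction establishes), and you would still need to control the non-spoke edges afterward. As written, the proposal identifies the correct target theorem to quote but does not supply a valid bridge to it; the paper's bridge is Theorem~\ref{mainlemma}, and the square-root exponent arises from balancing the two terms there, not from a subgraph-extraction argument of the kind you outline.
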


To the best of our knowledge, no sub-linear upper bound is known for this problem. 


\paragraph{Algorithmic aspects}
Theorem \ref{main} gives a lower bound on the size of a largest independent set in the intersection graph
of edges in a complete simple topological graph.
Besides the fact that computing the maximum number of pairwise disjoint elements in an arrangement of geometric objects is
an old problem in computational geometry, this line of research
is also motivated by applications, e.g., in frequency assignment~\cite{ELR04}, computational cartography~\cite{APS98}, or VLSI design~\cite{HM85}.
Determining the size of a largest independent set is NP-hard already
for intersection graphs of sets of segments in the plane lying in two directions~\cite{KN90}, disks~\cite{FPT81} and rectangles~\cite{AI83}.  For most known cases, efficient algorithms searching for a large independent set in intersection graphs of geometric objects
can only approximate the maximum. It is for this reason that a lot of research has been developed for finding such approximations (see \cite{fox2011computing} for more references).

Our proof of Theorem~\ref{main} yields an efficient algorithm for finding $\Omega(n^{\frac 12-\epsilon})$ pairwise disjoint edges
in a complete simple topological graph. 
Because the number of pairwise disjoint edges is at most $\lfloor n/2 \rfloor$ then our algorithm is a $O(n^{\frac 12+\epsilon} )$-approximation algorithm
for the problem of finding the maximal set of pairwise disjoint edges
in a complete simple topological graph, which beats the factor of $n^{1-\delta}$ for any small $\delta>0$ in the inapproximability  result for the independence number in general graphs due to Zuckerman~\cite{Z07}.

\vspace{1cm}
In Section 2, we introduce cylindrical graphs and state the necessary results showing that in order to prove Theorem \ref{main} it is enough to show that we can always find $\Omega(n^{1-\epsilon})$ pairwise disjoint edges in every complete monotone simple cylindrical  graph. The latter is proved in two steps:  in Section 3, for flags (see Section 3 for the definition); in Section 4, for all graphs.

\section{Cylindrical drawings of graphs}

 Let $\mathcal C$ be the surface of an infinite open cylinder. Formally, $\mathcal C=S^1\times \mathbb R$. We may assume that $S^1$ is the interval $[0,1]$ after gluing the point $0$ to the point $1$. Then,
we can characterise each point $p\in \mathcal C$ by its coordinates: we use $p_x$ to denote its $x$-coordinate and $p_y$ to denote its $y$-coordinate with $0\leq p_x<1$ and $p_y\in \mathbb R$.  
A \emph{cylindrical graph} is a graph drawn on $\mathcal C$ so that its vertices are represented by points and its edges are represented by Jordan arcs connecting the respective endpoints (this is similar to topological graphs with the only difference that the latter are drawn in the plane). As in topological graphs, cylindrical graphs do not allow overlapping edges or edges passing through a vertex.  A \emph{cylindrical graph} is \emph{simple} if every pair of its edges meet at most once either in a common vertex or at a proper crossing.

 The straight lines of $\mathcal C$ with fixed $x$-coordinate will be called the vertical lines of $\mathcal C$. We let $l_{x=a}$ denote the vertical line with $x$-coordinate equal to $a$.
We say that a curve $\gamma\in \mathcal C$ is \emph{x-monotone} if every vertical line intersects $\gamma$ in at most one point. We say that a cylindrical graph is \emph{monotone} if each of its edges is an $x$-monotone curve and furthermore no pair of vertices have the same $x$-coordinate. We will assume that the vertices of any cylindrical graph have $x$-coordinate distinct from zero. 

A monotone cylindrical graph $G$ can be easily represented on the plane: we simply cut $\mathcal C$ along the line $l_{x=0}$ and embed the resulting surface $[0,1)\times \mathbb R$ in the plane in the natural way. We call this the \emph{plane representation} of $\mathcal C$. For a graph $G$ drawn on $\mathcal C$ we will say that the \emph{plane representation} of $G$ is the drawing of $G$ given by the plane representation of $\mathcal C$, note that some edges of $G$ might be cut into two connected components while some edges will remain intact, but with both cases, the edges will stay $x$-monotone curves consisting of either one or two connected pieces, see Figure \ref{cylindricalgraphs}. Throughout this paper we will refer to the plane representation of $\mathcal C$ and $G$ rather than to the actual drawings on $\mathcal C$. Hence we also refer to left and right, so for example we say a point $p\in \mathcal C$ lies to the left (and right) of a point $q\in \mathcal C$ if $p_x<q_x$ ($p_x>q_x$). 


\begin{figure}
\begin{center}
\includegraphics[width=1\columnwidth]{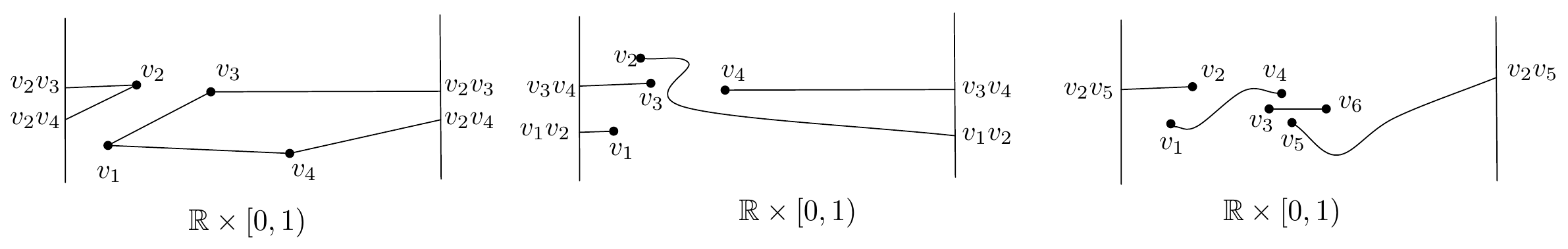}
\end{center}
\caption{Three different plane representations of cylindrical graphs. }
\label{cylindricalgraphs}
\end{figure}
  It is easy to see that a complete topological graph drawn on the plane with its edges being $x$-monotone curves always contains $\lfloor \frac n2 \rfloor$ pairwise disjoint edges. 
In \cite{socgfrv} it was conjectured that a similar statement is true for complete monotone simple cylindrical graphs, that is that every complete  monotone simple cylindrical graph contains at least $cn$ pairwise disjoint edges, for some fixed constant $c$. We prove this conjecture up to a factor of $n^\epsilon$ for any $\epsilon>0$. 
 
\begin{theorem}\label{main2}
A complete monotone simple cylindrical graph on $n$ vertices contains $\Omega (n^{1-\epsilon})$ pairwise disjoint edges for any $\epsilon>0$. 
\end{theorem}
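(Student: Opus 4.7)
My plan is to follow the two-phase strategy foreshadowed at the end of Section~1: first prove the theorem for a restricted class of monotone cylindrical graphs called \emph{flags} (in which the edges around a distinguished vertex have uniform topological behavior), then reduce the general case to this one. Label the vertices $v_1,\ldots,v_n$ in increasing order of $x$-coordinate. The fundamental dichotomy to exploit is that in the plane representation of $\mathcal C$, every edge $v_iv_j$ with $i<j$ is either \emph{short} (lies in the vertical strip between $l_{x=x_i}$ and $l_{x=x_j}$) or \emph{long} (wraps around, crossing $l_{x=0}$). All subsequent classifications and pigeonhole arguments will be built on top of this basic distinction.

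For the reduction to flags, I would partition the edges of the graph according to which ``type'' of wrap-around behavior they exhibit relative to a candidate distinguished vertex, and apply pigeonhole to pass to an induced subgraph on $\Omega(n)$ vertices on which all such edges have a common type. Since this only costs a constant factor per step, iterating a constantly-many such pigeonhole reductions still leaves $\Omega(n)$ vertices, which is cheap enough for an $n^{1-\epsilon}$ bound.

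The flag case is where the real work lies, and my plan is recursive. The star of the distinguished vertex $v_0$ partitions $\mathcal C\setminus\{v_0\}$ into faces, each bounded by two consecutive edges of the star. Within each face, the induced subgraph resembles a complete monotone topological graph in the plane, for which the remark in Section~2 already guarantees $\Omega(|F|)$ pairwise disjoint edges via the classical ``consecutive $x$-coordinate pairs'' construction. Combining these local packings with an induction on the number of vertices, I would aim for a recursion of the form $f(n) \geq t\cdot f(n/t) - O(t)$ with $t=t(n,\epsilon)$ chosen as a slowly growing function; solving this kind of recursion yields $f(n)=\Omega(n^{1-\epsilon})$ for any fixed $\epsilon>0$.

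The chief obstacle, I expect, is controlling the interaction between edges living in different faces of $v_0$'s star. Because of the cylindrical topology, long edges can thread through multiple faces and force crossings that are not accounted for by a purely local face-by-face analysis; simplicity of the graph limits these crossings, but only barely. Guaranteeing that the locally disjoint edges extracted from each face can be assembled into a \emph{globally} disjoint collection, while keeping the per-level loss at $n^{o(1)}$ rather than something polynomial, is the delicate topological book-keeping that makes this step hard and is presumably where the full strength of the ``flag'' hypothesis is used.
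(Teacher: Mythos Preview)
Your outline diverges from the paper in two essential places, and in each case the plan as stated does not go through.

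First, your notion of ``flag'' is not the paper's. In the paper a flag is a complete monotone simple cylindrical graph in which \emph{every} edge crosses the vertical line $l_{x=0}$; there is no distinguished vertex and no star. For this class the paper proves a \emph{linear} lower bound ($\lceil n/25\rceil+1$ pairwise disjoint edges), not an $n^{1-\epsilon}$ bound, and it does so by processing vertices left to right and at each step either finding a ``separating edge'' (splitting the remaining vertices into two independent pieces) or a ``good triplet'' (yielding one disjoint edge and a slightly smaller piece). Crucially, the output is not just a disjoint matching but a \emph{proper} one: any two of its edges are either nested by $x$-coordinate or separated by an auxiliary edge of the flag. Your face-by-face plan cannot replace this: the faces of the star of a vertex $v_0$ are not vertical strips, so the induced subgraph on the vertices in a face is still a cylindrical graph whose edges may wrap around $l_{x=0}$, and the ``consecutive $x$-pairs'' trick for plane $x$-monotone graphs does not apply there.

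Second, the reduction from general cylindrical graphs to flags is not a pigeonhole on edge types; you cannot pass to an induced subgraph of size $\Omega(n)$ that is a flag, because being a flag requires \emph{all} $\binom{n}{2}$ edges to cross a common vertical line. The paper instead partitions the cylinder into roughly $n/k$ vertical slabs with $k\approx n^{\epsilon}$ vertices each. Either many slabs contain an internal edge (done), or some slab $P$ has none, which forces $G[V(P)]$ to be a flag on $k$ vertices. The flag theorem then gives $\Omega(k)$ pairwise disjoint edges inside $P$, and these edges slice the remaining $n-k$ vertices into layers $V_1,\ldots,V_{\alpha+1}$. The ``proper'' structure of the disjoint set is exactly what is needed to prove (Claim~\ref{inductionstep}) that edges on opposite sides of a layer do not cross. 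A counting argument then locates a layer of controlled size at which to cut and recurse, yielding $f(|U|)+f(|W|)\geq f(n)$ for $f(n)=cn^{1-\epsilon}$. So the $n^{1-\epsilon}$ loss lives entirely in the general-to-flag reduction, not in the flag case; your proposal has this inverted, and the recursion $f(n)\geq t\cdot f(n/t)-O(t)$ you aim for is not the one that actually arises.
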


Theorem \ref{main2} is the main contribution of this paper. Theorem \ref{main} follows by combining Theorem \ref{main2} with the following Theorem of R. Fulek. 

\begin{theorem}\label{mainlemma}\cite{radoreduction}
Let $c(n')$ denote the maximum number $c$ such that every complete monotone simple cylindrical graph on $n'$ vertices contains a disjoint matching of size at least $c$. For every complete simple topological graph $G$ with $n$ vertices there exists $\Delta=\Delta(G)$, $0 < \Delta < n$, such that $G$ contains a set of at least $\max \{\frac{n}{\Delta},c(\Delta)\}$ pairwise disjoint edges.
\end{theorem}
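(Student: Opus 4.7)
The plan is to pick a distinguished vertex $v\in V(G)$ and use its star as a skeleton to convert the planar drawing of $G$ into a near-cylindrical drawing. First I would cut out a small open disk $D$ around $v$ that is crossed exactly once by each of the $n-1$ star edges; the complement of $D$ in $\mathbb{R}^2$ is homeomorphic to the monotone cylinder $\mathcal{C}=S^1\times\mathbb{R}$, and I would choose the homeomorphism so that each star edge $vu_i$ becomes a vertical ray descending to $-\infty$. This assigns each other vertex $u_i$ an $x$-coordinate $i/(n-1)$ determined by the cyclic order of the star edges at $v$, together with some $y$-coordinate. The resulting drawing of $G-v$ on $\mathcal{C}$ is a simple cylindrical graph, but the non-star edges need not be $x$-monotone: an edge $u_iu_j$ may wind around the cylinder any number of times before arriving at its endpoint.

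Next I would attach to each non-star edge a winding number $\omega(u_iu_j)\in\mathbb{Z}$ that signedly counts the number of times the drawing wraps around $\mathcal{C}$; equivalently, $\omega$ can be read off from the cyclic sequence of star edges that $u_iu_j$ crosses. An edge with $\omega=0$ is essentially $x$-monotone: up to an isotopy that fixes the endpoints and the star, it can be straightened into an $x$-monotone arc. My goal is then to find a large induced subgraph on which all edges have winding zero and can be simultaneously straightened, yielding a complete monotone simple cylindrical subgraph to which $c(\cdot)$ applies.

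The proof then proceeds by a dichotomy parameterized by $\Delta$. Either I can find a set $S\subseteq V(G)\setminus\{v\}$ of size $\Delta$ on which every pair-edge has zero winding and the $\binom{\Delta}{2}$ edges simultaneously straighten to be $x$-monotone while preserving simplicity, in which case applying the definition of $c(\Delta)$ produces $c(\Delta)$ pairwise disjoint edges; or no such $S$ exists, which forces many edges to have nonzero winding and therefore to cross many star edges of $v$. In the second horn of the dichotomy, a planarity/chord-diagram argument on the high-winding edges, together with an extremal choice of $v$ (for instance, the vertex whose star carries the fewest crossings), should extract $\Omega(n/\Delta)$ pairwise disjoint edges directly from the non-star edges.

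The hard part will be the simultaneous straightening in the first horn: while straightening one zero-winding edge is easy in isolation, doing so for many edges at once can create new crossings and violate simplicity. Handling this will require either a clever global isotopy that pushes all zero-winding edges into monotone position in one coordinated sweep, or an inductive ``peeling'' argument that orders edges by $y$-coordinate and straightens them one by one while preserving the previously established monotonicity. Once the dichotomy is in place, the theorem follows by choosing $\Delta$ to balance $n/\Delta$ against $c(\Delta)$; combining with Theorem~\ref{main2}, where $c(\Delta)=\Omega(\Delta^{1-\epsilon})$, then recovers Theorem~\ref{main}.
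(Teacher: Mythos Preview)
This theorem is not proved in the present paper; it is quoted from Fulek~\cite{radoreduction} and used as a black box to derive Theorem~\ref{main} from Theorem~\ref{main2}, so there is no proof here to compare your proposal against. Your high-level picture is nonetheless in the right spirit: the reduction does single out a vertex $v$, use the star at $v$ to impose a cyclic coordinate on the remaining $n-1$ vertices, and pass to a cylinder on which a suitable induced subgraph can be redrawn as a complete monotone simple cylindrical graph.

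The proposal, however, has a genuine gap. The ``second horn'' of your dichotomy carries no content: you assert that if no size-$\Delta$ simultaneously-straightenable set exists then many edges must have nonzero winding, and that an unspecified ``planarity/chord-diagram argument'' then extracts $\Omega(n/\Delta)$ pairwise disjoint edges, but neither step is justified. Failure of simultaneous straightening need not be caused by nonzero winding (it could arise entirely among zero-winding edges), and even granting many high-winding edges you give no mechanism for producing pairwise disjoint ones. Relatedly, the statement asks for $\max\{n/\Delta,c(\Delta)\}$ disjoint edges for a \emph{single} $\Delta=\Delta(G)$, so $\Delta$ should be \emph{defined} from the drawing and both bounds established for that value, rather than treated as the free parameter of an either/or case split. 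Finally, your acknowledged difficulty with simultaneous straightening in the first horn is real and is where much of the work in~\cite{radoreduction} goes.
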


\subsection*{Proof of Theorem \ref{main}}
\begin{proof} 
 Let $\epsilon>0$ and $G$ be a simple topological graph with $n$ vertices. By Theorem \ref{main2} it follows that every complete monotone simple cylindrical graph with $n'$ vertices has a set of $cn'^{1-\epsilon}$ pairwise disjoint edges for some constant $c(\epsilon)$. By Theorem \ref{mainlemma} there exists a $\Delta=\Delta(G)$ such that $G$ has a set of at least $\max \{\frac{n}{\Delta}, c\Delta^{1-\epsilon}\}$ pairwise disjoint edges. If $\Delta\leq n^{1/2}$ then $n^{1/2-\epsilon}\leq n^{1/2} \leq n/\Delta \leq \max \{\frac{n}{\Delta}, c\Delta^{1-\epsilon}\}$. If, on the other hand, $\Delta>n^{1/2}$ then 
\[ cn^{1/2-\epsilon}\leq c(n^{1/2})^{1-\epsilon}\leq c\Delta^{1-\epsilon}\leq \max \left \{\frac{n}{\Delta}, c\Delta^{1-\epsilon}\right\}. \]
Hence in both cases, we obtain a set with $\Omega(n^{\frac 12-\epsilon})$ pairwise disjoint edges.
\end{proof}

Although we do not present an explicit algorithm for Theorem \ref{main2}, proofs are inductive and can easily be changed to a polynomial time algorithm. Furthermore, the application of Theorem \ref{mainlemma} can also be done in polynomial time \cite{radoreduction}. Hence we give an implicit polynomial time algorithm for finding a set of $\Omega({n^{1/2-\epsilon}})$ pairwise disjoint edges in every simple complete topological graph. 
In the following sections we prove Theorem \ref{main2}.

\subsection{Notation and Definitions}
The notations in this section  will be used extensively during this paper to refer to topological properties of cylindrical graphs .  

We say that two graphs drawn on the same surface are \emph{disjoint} if there is no pair of edges, one from each graph, that are intersecting. For the rest of the paper all considered curves (and hence all considered edges) will be $x$-monotone in $\mathcal C$. Let $G$ be a complete monotone simple cylindrical graph and $e=(vu),f =(wz)$ be two edges of $G$ which do not share vertices. We will say that $f$ \emph{has endpoints to the left of} $e$ if
\[ w_x,z_x< v_x,u_x.\]

\subsection*{Relating edges}
\begin{definition}
Let $e,f$ be two $x$-monotone curves in $\mathcal C$. 
We say that $e,f$ are \emph{related} if  $e$ and $f$ are non-crossing, there is a vertical line intersecting the relative interior of both $e$ and $f$ and furthermore for every vertical line intersecting the relative interior of both $e$ and $f$ the order of intersection is the same. \end{definition}
Given two related curves $e,f$ we write $e\below f$ if for every vertical line $l$ that intersects the relative interior of both $e$ and $f$ we have $(l\cap e)_y\leq  (l \cap f)_y$ (we define $e\ab f$ analogously).  If $e$ and $f$ are edges of a cylindrical graph that are related then either $e\below f$ or $e\ab f$ but both cannot be true. Note that two non-crossing edges are not necessarily related, even if there is a vertical line that intersects the relative interior of both of them, as one can see in the middle Figure \ref{cylindricalgraphs} that $v_1v_2$ and $v_3v_4$ are not related. However if two edges share an endpoint and there is a vertical line intersecting the relative interior of both of them, then they are necessarily related.

Given a point $v\in \mathcal C$ and an $x$-monotone curve $e$ we say that $e$ and $v$ are \emph{related} if the vertical line $l_{x=v_x}$ intersects $e$ in its relative interior. If $v$ and $e$ are related then we say that $v$ is \emph{below} $e$ if $v_y< (l_{x=v_x}\cap e)_y$, and we denote this as $v\below e$ and $e\ab v$ (we define $v$ to be \emph{above} $e$, and denote it as $v\ab e$ and $e \below v$, analogously). 

\textbf{Remark.}  Note that $\below$ is defined only between two curves or a point and a curve, and furthermore $\below$ does not give a proper order to the edges: for example one may see in the rightmost of Figure \ref{cylindricalgraphs} that $v_2v_5\below v_3v_6$, $v_3v_6\below v_1v_4$ and $v_1v_4 \below v_2v_5$. Note that if $\{a,b,c\}$ is a set of three edges such that every pair is related, and furthermore there is a vertical line $l$ intersecting all of the elements of $\{a,b,c\}$ then it does follow that $a\below b$ and $b\below c$ implies that $a\below c$. 



  \textbf{Remark.} Throughout this paper we will assume that no two edges cross in a point with $x$-coordinate equal
 to zero, and no vertex has $x$-coordinate equal to zero.

\section{Flags}
For complete simple topological graphs drawn in the plane with $x$-monotone curves it is easy to find a set of $\lfloor n/2 \rfloor $ pairwise disjoint edges: order the vertices from left to right as $v_1,...,v_n$ (i.e., with ascending $x$-coordinate), the edges $\{ v_{2k+1}v_{2k +2}: k\in \{0,...,\lfloor n/2\rfloor -1\} \}$ are pairwise disjoint.  In some cases the plane representation of a complete simple cylindrical graph is also a complete simple topological graph drawn on the plane using $x$-monotone curves (this is the case when no edge crosses $l_{x=0}$), in this case finding a set of $\lfloor n/2 \rfloor $ pairwise disjoint edges is easy.

 We say that a complete monotone simple cylindrical graph $G$ is a \emph{flag} if $l_{x=0}$ intersects all edges of $G$ in their relative interior. 
 Note
 that whether a complete simple cylindrical graph is a flag depends on the set of coordinates used, in
 particular, a graph might not be a flag even if there is a vertical line that intersects all edges
 in their interior, however such a graph would become a flag by a suitable change of coordinates. Our first ingredient for proving Theorem \ref{main2} is to prove it for flags. 
\begin{theorem}\label{flag}
A complete monotone simple cylindrical graph that is a flag with $n\geq 10$ vertices contains a set with $\lceil n/25\rceil +1$ pairwise disjoint edges.
\end{theorem}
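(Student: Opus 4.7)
My plan is to prove Theorem \ref{flag} by strong induction on $n$. For the base cases (small $n$) the target $\lceil n/25 \rceil + 1$ is a small constant and can be verified directly or, for most ranges, reduced to known lower bounds on pairwise disjoint edges in complete simple topological graphs --- for instance the $\Omega(\log n / \log \log n)$ estimate of Pach and T\'oth \cite{PT05} applied to the plane representation of the flag.

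For the inductive step, assume $n$ is large. I would locate an edge $e=(u,v)$ of $G$ together with a vertex subset $S \supseteq \{u,v\}$ of size at most $25$, such that every edge of the induced sub-flag $G' := G[V(G) \setminus S]$ is disjoint from $e$. The induction hypothesis applied to $G'$ then supplies $\lceil (n-25)/25 \rceil + 1 = \lceil n/25 \rceil$ pairwise disjoint edges, and adjoining $e$ yields the required $\lceil n/25 \rceil + 1$.

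To find $e$ and $S$, I would choose $e$ extremally --- a natural candidate is the edge whose intersection with $l_{x=0}$ has the largest $y$-coordinate among all edges of $G$. This extremality forces any edge $f$ crossing $e$ to have its $l_{x=0}$-intersection strictly below that of $e$, so that $f$ must swap its vertical position relative to $e$ at some interior point of the plane representation. Combining $x$-monotonicity with the simplicity hypothesis (each pair of edges meets at most once), one should be able to argue that the endpoints of each such $f$ must lie near $u$ or $v$ in the cyclic $x$-order of vertices on the cylinder. The set $S$ then consists of $u$, $v$, and these ``exceptional'' nearby vertices.

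The main obstacle is keeping $|S|\le 25$. The crossings of $e$ and $f$ fall into several topological types depending on which halves (left or right) of $e$ and $f$ actually cross, and each type requires a separate pigeonhole argument leveraging simplicity and $x$-monotonicity to bound its exceptional vertices by a small constant. I expect the absolute constant $25$ to emerge from aggregating these contributions, possibly after refining the extremal choice of $e$ (for instance, by breaking ties among near-maximal candidates so as to minimize the $x$-spread of $\{u,v\}$), with the smallest values of $n$ absorbed into the base step.
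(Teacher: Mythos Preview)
Your proposal has a genuine gap at its central step. The claim that an extremally chosen edge $e$ (say, the one whose intersection with $l_{x=0}$ is highest) is crossed only by edges whose endpoints cluster near $u$ and $v$ in the $x$-order is unsupported and in fact false in general. Take for instance $e=v_1v_2$ (the two leftmost vertices). Claim~\ref{impossiblecrossing} of the paper characterises exactly when an edge $v_{i_3}v_{i_4}$ with $v_{i_3},v_{i_4}\below e$ crosses $e$: it happens precisely when $(v_1v_2)^+$ meets $(v_{i_3}v_{i_4})^-$, equivalently when $v_2\below v_{i_3}v_{i_4}$ and $v_1\ab v_{i_3}v_{i_4}$. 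Nothing in this condition forces $i_3$ or $i_4$ to be close to $1$ or $2$; one can draw flags in which $e$ is crossed by $\Omega(n)$ edges whose endpoints are spread across the full range of $x$-coordinates. So there is no reason to expect $|S|\le 25$, and the pigeonhole arguments you allude to have no purchase.

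More structurally, the paper's induction is not a pure ``peel off one edge and recurse on the rest'' scheme. It processes the six leftmost vertices and shows (Claim~\ref{inductionlemma}) that they always contain either a \emph{separating edge} or a \emph{good triplet}. A good triplet does give what you want: one edge $v_iv_j$ disjoint from $G[V^-(v_jv_k)]$, losing at most seven vertices. But a separating edge $v_iv_j$ does \emph{not} in general yield a single edge disjoint from almost everything; instead it splits the remaining vertices into $V^+(v_iv_j)$ and $V^-(v_iv_j)$ whose induced subgraphs are mutually disjoint (Claim~\ref{separatingedge}), and one recurses on both halves. The ``$+1$'' in the statement is precisely what makes this two-branch recursion close: $(\lceil a/25\rceil+1)+(\lceil b/25\rceil+1)\ge \lceil n/25\rceil+1$ when $a+b=n-6$. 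Your single-branch scheme cannot exploit this, and there is no evident substitute. Finally, invoking the $\Omega(\log n/\log\log n)$ bound for the base cases is circular in spirit (those results are for general simple topological graphs and carry unspecified constants), whereas the paper handles $n=10$ directly via the same separating-edge/triplet dichotomy.
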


In fact,  we will prove something stronger than Theorem \ref{flag}, that is we will find a very specific set of disjoint edges. This extra information will be crucial in the proof of Theorem \ref{main2} (without this extra properties Claim \ref{inductionstep} in the next section would not be true).  
 \begin{definition}
We say that a set of pairwise disjoint edges $\mathcal E$ in a flag $G$ is \emph{proper} if for any two edges  $e,f\in \mathcal E$, $e$ and $f$ are related, and furthermore if $e \below f$ then either
\\(a) $e$ has both endpoints to the left of both endpoints of $f$
\\(b) or $f$ has both endpoints to the left of both endpoints of $e$
\\(c) or there is an edge $g\in E(G)$ with both endpoints to the left of the endpoints of $e$ and $f$ and such that the endpoints of $e$ are below $g$ while the endpoints of $f$ are above $g$ (in other words $g$ separates the endpoints of $e$ and $f$). 

\end{definition}
\begin{theorem}\label{flagstrong}
A complete monotone cylindrical graph that is a flag with $n\geq 10$ vertices contains a proper set of $\lceil n/25\rceil +1$ pairwise disjoint edges.
\end{theorem}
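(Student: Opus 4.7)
The plan is to prove Theorem \ref{flagstrong} by strong induction on $n$, exploiting the recursive structure built into the definition of a proper set. For the base case $10 \le n \le 25$ (where the bound asks only for a proper pair), I would exhibit the pair directly using two extremal edges at $l_{x=0}$ and checking cases (a)--(c) by hand.

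For the inductive step with $n \ge 26$, order the vertices $v_1, \ldots, v_n$ by increasing $x$-coordinate and set $L = \{v_1, \ldots, v_{25}\}$. Choose a ``separator'' edge $g^* = v_{k_1}v_{k_2}$ with both endpoints in $L$. Since $G$ is a flag, $g^*$ wraps through $l_{x=0}$, and its cylinder $x$-range is the short arc from $v_{k_2,x}$ through $0$ to $v_{k_1,x}$, which contains every vertex $v_i$ with $i \ge 26$. Hence every such vertex is related to $g^*$ and is either below ($v_i \below g^*$) or above ($g^* \below v_i$); partition $V \setminus L$ accordingly into $V_-$ and $V_+$.

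Each induced subgraph $G[V_-]$ and $G[V_+]$ remains a complete simple monotone flag, so by induction each contains a proper set $\mathcal{E}_-$, $\mathcal{E}_+$ of the expected sizes. I claim $\mathcal{E} := \mathcal{E}_- \cup \mathcal{E}_+$ is a proper set in $G$: pairs inside one piece are proper by induction, while a cross-pair $(e,f)$ with $e \in \mathcal{E}_-, f \in \mathcal{E}_+$ satisfies condition (c) of the proper definition with $g^*$ as the separator. Disjointness of such cross-pairs is the subtle point: in the ``gap'' region $(v_{k_1,x}, v_{k_2,x})$ where $g^*$ is absent from the plane representation, we still have $e$ below $v_{k_i}$ and $f$ above $v_{k_i}$ at both gap boundaries $x = v_{k_i, x}$, so $e$ lies below $f$ at both ends of the gap; since the graph is simple (at most one crossing per pair), $e$ and $f$ cannot cross inside the gap either, and outside the gap they are separated by $g^*$ directly. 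The arithmetic closes the induction: $\lceil|V_-|/25\rceil + \lceil|V_+|/25\rceil + 2 \ge \lceil(n-25)/25\rceil + 2 = \lceil n/25\rceil + 1$.

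The main obstacle is the selection of $g^*$ and the handling of boundary cases in which one of $|V_-|, |V_+|$ is too small to apply the induction. Such cases need adjustment — for instance, adding $g^*$ itself to the proper set from the larger piece via condition (a) of the proper definition, which is available since every endpoint of $g^*$ lies in $L$ and hence to the left of every endpoint in $V_- \cup V_+$, thereby making up the lost $+1$. The pigeonhole available among the $\binom{25}{2}$ candidate edges in $L$, together with the $+1$ slack in the target bound, is what dictates the constant $25$ in the statement.
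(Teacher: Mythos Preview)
Your high-level strategy matches the paper's: induct on $n$, find a separator among the first few vertices, split the remaining vertices into those above and below, recurse on each side, and use the $+1$ slack to absorb the discarded vertices. The paper implements this via Claims~\ref{separatingedge}, \ref{triplet}, and \ref{inductionlemma}, working with only the first six vertices rather than twenty-five.

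However, your disjointness argument for cross-pairs has a genuine gap. You assert that in the interval $(v_{k_1,x}, v_{k_2,x})$ where $g^*$ is absent, the edge $e$ (with both endpoints in $V_-$) passes \emph{below} the endpoints $v_{k_1}, v_{k_2}$ of $g^*$, and $f$ passes above. This is false in general. The paper's Claim~\ref{impossiblecrossing} shows precisely that an edge $e=v_{i_3}v_{i_4}$ with $v_{i_3},v_{i_4}\below g^*$ may nonetheless cross $g^*$, and in that case $v_{k_2}\below e$: the edge $e$ passes \emph{above} the right endpoint of $g^*$ (see Figure~\ref{impossiblea}). So ``separated by $g^*$ outside the gap'' fails, and your boundary-value parity argument inside the gap has the wrong input. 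The paper's proof of the corresponding fact (Claim~\ref{separatingedge}) is substantially more delicate: it introduces auxiliary edges $v_iu$ and $v_iw$ to the leftmost vertices of $V^+$ and $V^-$, uses Lemma~\ref{niceorder} to control their order, first proves these auxiliary edges are disjoint from the opposite side, and only then argues via extensions that $G[V^+]$ and $G[V^-]$ are disjoint.

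A second, smaller gap: you never specify how $g^*$ is chosen, only gesturing at pigeonhole over $\binom{25}{2}$ edges. An arbitrary edge in $L$ can have all of $V\setminus L$ on one side. The paper resolves this with Claim~\ref{inductionlemma}: among the first six vertices one always finds either a genuinely separating edge or a ``good triplet'' $(v_i,v_j,v_k)$, and in the triplet case one adds $v_iv_j$ itself to the set and recurses on a single piece (Claim~\ref{triplet}). Your sketch of the degenerate case (``adding $g^*$ itself'') is on the right track but would also need the disjointness of $g^*$ from the large side, which again is not automatic.
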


The idea of the proof of Theorem \ref{flagstrong} is simple. We process the vertices by increasing $x$-coordinate, at each iteration we delete some vertices, and we either find a way of separating the remaining vertices into two parts $V_1$ and $V_2$ for which the corresponding induced graphs are disjoint, or we find an edge $e$ and a set of vertices $V'$ such that $G[V']$ is disjoint with $e$. The ``$+1$'' in the statement of the theorem is quite important, it gives us space to delete some vertices at each iteration and apply induction on two separate pieces (see details in the next subsection). 

\subsection{Basic observations}
We now prove some properties for flags, subsequently we put them together to prove Theorem \ref{flagstrong}. 
We say that an edge $e$ is \emph{circular} if $l_{x=0}$ intersects $e$ in its interior. Hence a monotone simple cylindrical graph is a flag if all of its edges are circular. Every circular edge $e=vw$ with $v_x< w_x$ is naturally partitioned into two curves by the line $l_{x=0}$: the one consisting of the points in the curve with coordinate smaller than $v_x$ will be called the \emph{negative part} of $e$ and will be denoted $e^-=(vw)^-$;  the one consisting of the points of $e$ with coordinate greater than $w_x$ will be called the \emph{positive part} and will be denoted as $e^+=(vw)^+$, see Figure \ref{transitivity}.

\subsubsection*{More definitions: extensions of edges.}
Given two curves $e$ and $f$ we write $e-f$ to denote the set of points of $e$ that are not in $f$. 
Let $G$ be a simple monotone cylindrical graph and $e$ be an edge of $G$. We say that $e'$ is an \emph{extension} of $e$ if $e'$ is a closed curve such that $e\subseteq e'$ and furthermore $e'$ is an $x$-monotone curve in $\mathcal{C}$ (see Figure \ref{transitivity}).

\begin{figure}
\centering
\includegraphics{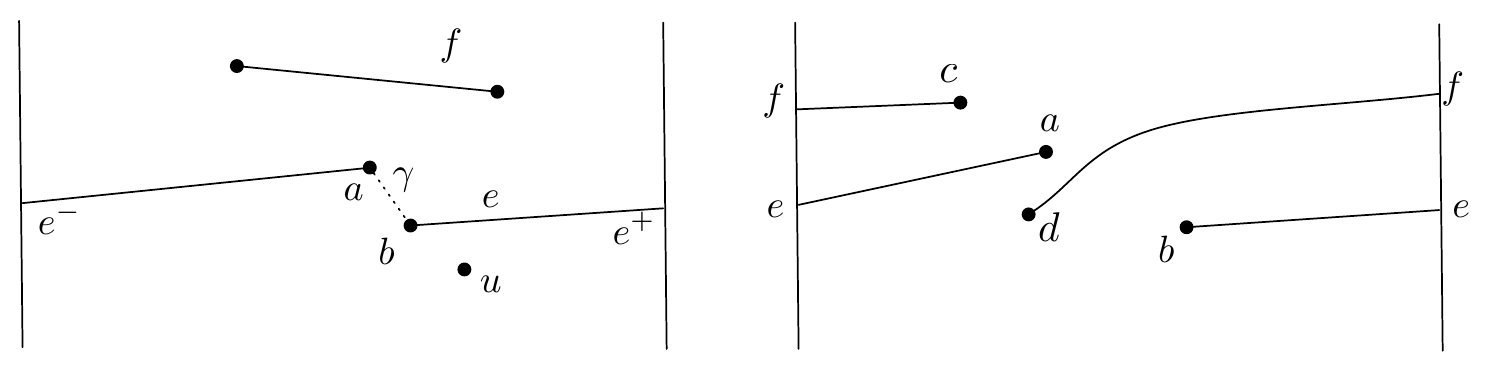}
\caption{Left: A circular edge $e$ together with its division into $e^-$ and $e^+$. $e\cup \gamma$ is an extension of $e$. Right: two circular edges which are not related, in this case $b \below f$ and $f\below a$.}
\label{transitivity}
\end{figure}

\begin{observation}\label{paircrossing}
Let $e$ be an $x$-monotone curve in $\mathcal C$. Let $a,b$ be two points in $\mathcal C$ with $a,b\below e$. Let $\gamma$ be an $x$-monotone curve joining $a$ and $b$ such that any vertical line that intersects $\gamma$ intersects $e$ as well. Then $e$ and $\gamma$ cross an even number of times. Furthermore, if $e$ and $\gamma$ do not cross then it follows that $\gamma \below e$. 
 \end{observation}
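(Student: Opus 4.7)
The plan is to reduce everything to the intermediate value theorem applied to the vertical gap between $\gamma$ and $e$. Since both curves are $x$-monotone, I would parametrize them by their $x$-coordinate: write $\gamma$ as the graph of a continuous function $y_\gamma(x)$ on its $x$-projection $I_\gamma$ (a connected subset of $S^1$), and analogously write $e$ as the graph of $y_e(x)$ on $I_e$. The hypothesis that every vertical line meeting $\gamma$ also meets $e$ gives $I_\gamma \subseteq I_e$, so the function $h(x) = y_\gamma(x) - y_e(x)$ is well defined and continuous on $I_\gamma$.

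The first step is to note that, because $a$ and $b$ lie below $e$, we have $h(a_x) = a_y - y_e(a_x) < 0$ and similarly $h(b_x) < 0$. A proper crossing of $\gamma$ and $e$ corresponds precisely to a point where $h$ vanishes and changes sign, while any tangential (non-transverse) contact leaves the sign of $h$ unchanged. Because $h$ is negative at both endpoints of the interval $I_\gamma$, the number of sign changes it undergoes along $I_\gamma$ must be even, which proves the first assertion.

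For the second part, suppose $\gamma$ and $e$ do not cross. Then $h$ has no sign change on $I_\gamma$, so being negative at both endpoints it satisfies $h(x) \leq 0$ throughout $I_\gamma$. Since $a_x$ and $b_x$ lie in the relative interior of $I_e$ (this is built into $a, b \below e$), every vertical line with $x$-coordinate strictly between $a_x$ and $b_x$ meets both $\gamma$ and $e$ in their relative interiors, which supplies the required vertical witness for the relatedness of $\gamma$ and $e$. Combined with $h \leq 0$, this gives $\gamma \below e$.

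The main subtlety will be the bookkeeping of tangential intersections: such points are intersections of $\gamma$ and $e$ but not crossings, so they let $h$ touch $0$ without contributing to the parity count, and they are harmless for the conclusion $\gamma \below e$ since the definition of $\below$ uses only the weak inequality. Everything else is a routine continuity argument, so I expect no deeper obstacle.
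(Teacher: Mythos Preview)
The paper states this result as an observation and gives no proof at all; it is treated as self-evident, with only the remark that when $e$ and $\gamma$ are edges of a simple cylindrical graph the parity statement forces them to be disjoint. Your argument---parametrizing both curves by their $x$-coordinate, forming the continuous height-difference $h(x)=y_\gamma(x)-y_e(x)$ on the arc $I_\gamma\subseteq I_e$, and reading off the parity of crossings from the parity of sign changes of $h$ between two negative endpoint values---is precisely the standard way to justify such a statement, and it is correct. Your handling of tangential contacts (they contribute to $h=0$ but not to a sign change, and the definition of $\below$ uses $\leq$) and of the relatedness witness (any vertical line strictly between $a_x$ and $b_x$) matches the paper's definitions.
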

 In the case that $e$ and $\gamma$ in the observation are edges on a simple cylindrical graph (or even only parts of edges), then it is clear that $\gamma$ and $e$ do not cross. 

\begin{observation}\label{aboutrelated}
Let $G$ be a simple monotone cylindrical graph and $e=ab$ and $f=cd$ be two disjoint circular edges of $G$. Then either $e$ and $f$ are related or $a\below f$ and $f \below b$ or $b\below f$ and $f \below a$ (see Figure \ref{transitivity}). 
\end{observation}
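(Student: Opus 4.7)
The plan is a case analysis on the cyclic arrangement of the four $x$-coordinates $a_x, b_x, c_x, d_x$ on $S^1$, exploiting the fact that two disjoint $x$-monotone curves cannot swap their vertical order on a connected strip of $x$-coordinates where both are defined.

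Without loss of generality take $a_x<b_x$, $c_x<d_x$, and assume the point $e\cap l_{x=0}$ lies strictly below the point $f\cap l_{x=0}$. Let $A_e = [0,a_x)\cup (b_x,1)$ and $A_f = [0,c_x)\cup (d_x,1)$, viewed as open arcs on $S^1$, be the sets of $x$-coordinates at which the relative interiors of $e$ and $f$ are defined (here I use that both edges are circular). The common domain $D = A_e \cap A_f = S^1\setminus\bigl((a_x,b_x)\cup(c_x,d_x)\bigr)$ is the complement of a union of two open arcs of $S^1$, hence is either one arc or a disjoint union of two arcs, and in either case contains $l_{x=0}$ in its interior.

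Disjointness of $e$ and $f$ makes their vertical order constant on each connected component of $D$ (else they would cross). The component $I_1$ containing $l_{x=0}$ inherits the order $e\below f$ from the setup. If $D$ has a single component, this already shows $e$ and $f$ are related. Otherwise $D$ has a second component $I_2$, which occurs exactly when the two gaps $(a_x,b_x)$ and $(c_x,d_x)$ are disjoint on $S^1$. On $I_2$ the order is either $e\below f$ (still related) or $e\ab f$ (not related). In the non-related sub-case, the two boundary $x$-coordinates of $I_2$ correspond to one endpoint of $e$ and one of $f$: either the pair $\{b,c\}$ or the pair $\{a,d\}$, according to whether the gap of $e$ precedes or follows the gap of $f$ going from $l_{x=0}$ around the cylinder. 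Continuity of the curves combined with disjointness then forces the endpoint of $e$ on $\partial I_2$ to lie strictly on the $I_2$-side of $f$, while a symmetric argument at the opposite side of the gap of $e$ (which borders $I_1$) places the other endpoint of $e$ strictly on the $I_1$-side of $f$. The two possible cyclic orderings produce, respectively, $a\below f \below b$ and $b\below f \below a$.

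The main difficulty is simply the bookkeeping: which gap comes first going around $S^1$, which arc of $D$ contains $l_{x=0}$, and which pair of vertices bounds each arc of $D$. The underlying topological content, namely that disjoint $x$-monotone arcs cannot reorder vertically on a connected strip where both exist, is elementary; promoting the weak inequalities coming from continuity at the gap-endpoints to the strict $\below$ or $\ab$ relations in the conclusion uses that vertices of $e$ do not lie on $f$ since the two edges are disjoint.
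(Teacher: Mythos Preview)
Your proof is correct and is precisely the case analysis the paper alludes to; the paper itself gives no argument beyond the words ``is a case analysis,'' so you have supplied the details the author omitted. One cosmetic remark: the assumption that $e$ lies below $f$ at $l_{x=0}$ is not literally ``without loss of generality,'' since the conclusion of the observation is phrased asymmetrically in $e$ and $f$; however, your argument is insensitive to this choice (reversing the order at $l_{x=0}$ simply swaps which cyclic ordering yields $a\below f\below b$ versus $b\below f\below a$), so nothing is lost.
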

Observation \ref{aboutrelated} is a case analysis. Note that it is only true for $e$ and $f$ circular edges. In short the observation says that either the edges are related, or the vertices of one edge are on different sides (with respect to the $y$-coordinate) of the other edge, and viceversa.  So for example, it implies that if $e=ab$ and $f=cd$ are two disjoint circular edges and $a,b\below f$ then $e$ and $f$ are related. 

\begin{observation}
Given two related curves $e$ and $f$ if $e \below f$ then:
\\(1)There is an extension $f'$ of $f$ such that $f'$ and $e$ are related and $e\below f'$.
\\(2) There is an extension $e'$ of $e$ such that $e'$ and $f$ are related and $e'\below f$. 
\end{observation}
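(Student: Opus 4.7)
The plan is to prove both parts by direct construction, exploiting that the cylinder $\mathcal C=S^1\times\mathbb R$ is unbounded vertically. Part (2) is obtained from part (1) by reflecting the vertical direction and swapping the roles of $e$ and $f$, so I focus on part (1).

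Pick $M$ strictly greater than the supremum of the $y$-coordinates attained on the compact arc $e$. I will construct $f'$ by adjoining to $f$ an $x$-monotone arc at each of its two endpoints. At the left endpoint $c=(c_x,c_y)$ of $f$, I attach an arc that, over a small $x$-range $(c_x-\delta,c_x]$, rises from $y=c_y$ to $y=M$, and then continues further leftward (cyclically on $S^1$) at $y$-coordinate $M$ until every $x$-coordinate in the projection of $e$ not already in the projection of $f$ has been covered. The right endpoint of $f$ is treated symmetrically. The resulting $f'$ is $x$-monotone, contains $f$, and is a closed subset of $\mathcal C$, so it qualifies as an extension of $f$.

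To verify the conclusion, consider any vertical line $l_{x=a}$ meeting both relative interiors of $e$ and $f'$. If $a$ lies in the $x$-projection of the relative interior of $f$, then $(l_{x=a}\cap f')_y=(l_{x=a}\cap f)_y>(l_{x=a}\cap e)_y$, which is just the hypothesis $e\below f$; otherwise $a$ lies outside the $x$-projection of $f$, and then by construction $(l_{x=a}\cap f')_y\geq M$, which exceeds every $y$-coordinate on $e$. Thus $e\below f'$ and the two curves do not cross, so they are related.

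The only subtle point is that the short climbing arcs near the endpoints of $f$ must not cross $e$. Since $e\below f$, continuity yields $e(x)\leq c_y$ for $x$ in a small neighborhood of $c_x$ within the $x$-projection of $e$; provided $\delta$ is chosen smaller than the half-width of this neighborhood, the climbing arc stays above $e$ throughout. In the degenerate case that $e$ passes through $c$ itself, a mild perturbation within the neighborhood suffices. Part (2) is entirely analogous, using a height $m$ strictly below the infimum of $y$-coordinates on $f$ and attaching arcs that dip to $y=m$ at each endpoint of $e$.
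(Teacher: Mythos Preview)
The paper does not supply a proof of this observation; it is stated as a routine fact and then used (notably in the proofs of Claim~\ref{separatingedge} and Claim~\ref{inductionstep}) without further justification. Your constructive argument is the natural one and is correct in substance.

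One point of terminology deserves care. In the paper an \emph{extension} is defined to be a \emph{closed curve} containing $e$, and the subsequent uses of extensions---for instance the parity argument ``since $e'$ and $f'$ are closed curves then they cross an even number of times'' in the proof of Claim~\ref{inductionstep}---make clear that ``closed curve'' means a Jordan loop winding once around the cylinder, not merely a topologically closed arc. Your construction is easily upgraded to meet this: rather than stopping once the $x$-projection of $e$ is covered, continue the two height-$M$ arcs attached at the endpoints of $f$ until they meet, producing a Jordan curve that intersects every vertical line exactly once. The verification that $e\below f'$ is then unchanged.

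Your handling of the short climbing arcs is adequate for the paper's purposes, since in every application the curves $e$ and $f$ are (pieces of) edges of a simple drawing, and edges are forbidden to pass through vertices; the degenerate case you flag therefore does not arise. If one wanted the observation in full generality for arbitrary related curves, a cleaner way to avoid the issue is to define the extension on the complementary arc $S^1\setminus \mathrm{int}(I_f)$ as the pointwise maximum of a continuous interpolant between the endpoint heights of $f$ and (a Tietze extension of) $\phi_e$; this is automatically continuous, agrees with $f$ at its endpoints because $e\below f$ there, and dominates $e$ everywhere by construction.
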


\begin{observation}\label{trivial} 
Let $G$ be a flag and let $a,b,c\in V(G)$. Then the edge $ab$ and the edge $ac$ are related. Furthermore if
 $c$ and $ab$ are related, then $c\below ab$ if and only if $ac\below ab$. Hence, $c\ab ab$ if and only if $ac\ab ab$
\end{observation}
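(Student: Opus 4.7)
The plan is to prove the three assertions in turn, the second being the substantive one and following from a continuity argument at the vertical line $l_{x=c_x}$.

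For the first assertion, since $G$ is a flag both $ab$ and $ac$ are circular, so each meets $l_{x=0}$ in its relative interior; in particular $l_{x=0}$ is a vertical line through both relative interiors. Simplicity together with the shared endpoint $a$ forces the two edges to be non-crossing, so the remark after the definition of ``related'' in the excerpt applies: if the $y$-order of $ab$ and $ac$ differed on two vertical lines hitting both relative interiors, an intermediate value argument would produce an extra crossing and contradict simplicity. Hence $ab$ and $ac$ are related.

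For the second assertion, suppose $c$ and $ab$ are related and let $p := l_{x=c_x}\cap ab$, which by hypothesis lies in the relative interior of $ab$. By part one $ab$ and $ac$ are related, so we may assume without loss of generality that $ac\below ab$. Because $ac$ is $x$-monotone with $c$ as an endpoint, there is one side of $l_{x=c_x}$ on which every sufficiently nearby vertical line $l_\varepsilon$ meets the relative interior of $ac$ at a point $q_\varepsilon\to c$ and the relative interior of $ab$ at a point $p_\varepsilon\to p$. The inequality $(q_\varepsilon)_y\leq (p_\varepsilon)_y$ holds along this sequence by the assumption $ac\below ab$, so in the limit $c_y\leq p_y$; since edges do not pass through vertices, $c\neq p$ and the inequality is strict, i.e., $c\below ab$. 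Conversely, if $ac\ab ab$, the same one-sided limit gives $c\ab ab$, which rules out $c\below ab$. Hence $c\below ab$ if and only if $ac\below ab$.

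The third assertion is immediate: under the hypothesis that $c$ and $ab$ are related, exactly one of $c\below ab$ and $c\ab ab$ holds (strictness again from $c\neq p$), and by part one exactly one of $ac\below ab$ and $ac\ab ab$ holds; the equivalence on $\below$ from part two therefore equates the $\ab$ cases as well.

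The only mildly delicate point I foresee is choosing the correct side of $l_{x=c_x}$ on which nearby vertical lines meet the relative interior of $ac$. Because the cylinder has no global order, this side depends on whether the circular edge $ac$ approaches $c$ through $l_{x=0}$ or not; however, $x$-monotonicity of $ac$ guarantees that exactly one such side exists, and the continuity argument is insensitive to which.
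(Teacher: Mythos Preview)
Your proof is correct and follows essentially the same line as the paper's justification (given in the Remark following the observation). The paper argues that edges sharing an endpoint are related whenever some vertical line meets both interiors, and that if $c\below ab$ then a point of $ac$ (namely $c$, or points of $ac$ arbitrarily close to it) lies below $ab$, forcing $ac\below ab$ by relatedness; your one-sided limit at $l_{x=c_x}$ is exactly this continuity step made explicit.
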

\textbf{Remark.} Note that in the above observation, there is no specification on the order of the vertices $a,b,c$. It is easy to see that two edges that share a vertex are related as long as there is a vertical line that intersects the interior of both. Since $G$ is a flag then $l_{x=0}$ is such a line. Hence, if $c$ is below $ab$ then there is a point of $ac$ that is below $ab$ and hence all points of $ac$ that are comparable with $ab$ must indeed be below $ab$.

 \subsection{Properties of Flags}
 For this subsection let $G$ be a flag, that is a complete simple monotone cylindrical graph such that $l_{x=0}$ intersects all edges of $G$ in its interior, and let $v_1,...,v_n$  be the vertices of $G$ ordered by increasing $x$-coordinate.

\begin{claim}\label{impossiblecrossing}
Let $0<i_1<i_2<i_3<i_4\leq n$ such that $v_{i_3},v_{i_4}\below v_{i_1}v_{i_2}$ ($v_{i_3},v_{i_4}\ab v_{i_1}v_{i_2}$). The following three properties are equivalent (See Figure \ref{wrongintersection}): 
\begin{itemize}
\item 1. $v_{i_1}v_{i_2}$ and $v_{i_3}v_{i_4}$ cross
\item 2. $(v_{i_1}v_{i_2})^+$ and $(v_{i_3}v_{i_4})^-$ cross (Figure \ref{impossiblea})
\item 3. $v_{i_2}\below v_{i_3}v_{i_4}, v_{i_1}\ab v_{i_3}v_{i_4}$ and $(v_{i_3}v_{i_4})^-\below (v_{i_1}v_{i_2})^-$ ( $v_{i_2}\ab v_{i_3}v_{i_4}, v_{i_1}\below v_{i_3}v_{i_4}$ and $(v_{i_3}v_{i_4})^-\ab (v_{i_1}v_{i_2})^-$)
\end{itemize}
Furthermore $v_{i_1}v_{i_2}$ and $v_{i_3}v_{i_4}$ are disjoint if and only if $v_{i_1},v_{i_2}\ab v_{i_3}v_{i_4}$ ($v_{i_1},v_{i_2}\below v_{i_3}v_{i_4}$). 
\end{claim}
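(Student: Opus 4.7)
The plan is a case analysis based on the four possible positions of $v_{i_1}$ and $v_{i_2}$ relative to $f := v_{i_3}v_{i_4}$. Since $v_{i_1,x}, v_{i_2,x} < v_{i_3,x}$, both $v_{i_1}$ and $v_{i_2}$ are related to $f$ via $f^-$, so by Observation \ref{trivial} each is either strictly above or strictly below $f$. This yields four cases: (a) both above, (b) $v_{i_1} \ab f$ and $v_{i_2} \below f$, (c) $v_{i_1} \below f$ and $v_{i_2} \ab f$, and (d) both below. A preliminary observation is that the $x$-ranges of $e := v_{i_1}v_{i_2}$ and of $f$ overlap in exactly three intervals: $[0,v_{i_1,x}]$ where $e^-$ meets $f^-$; $[v_{i_2,x},v_{i_3,x}]$ where $e^+$ meets $f^-$ (the region appearing in (2)); and $[v_{i_4,x},1)$ where $e^+$ meets $f^+$. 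Every crossing between $e$ and $f$ therefore lies in one of these three regions, and in each the number of crossings is determined by whether the two relevant curves switch their relative order across its endpoints.

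The heart of the argument is ruling out cases (c) and (d), where both completeness and simplicity of $G$ are essential. For case (c), I invoke the auxiliary edge $v_{i_2}v_{i_3}$ and apply Observation \ref{trivial} twice: once with the triple $(v_{i_2}, v_{i_1}, v_{i_3})$ to obtain $v_{i_2}v_{i_3} \below e$ from $v_{i_3} \below e$, and once with $(v_{i_3}, v_{i_4}, v_{i_2})$ to obtain $v_{i_2}v_{i_3} \ab f$ from $v_{i_2} \ab f$. Combining these yields the sandwich $f \below v_{i_2}v_{i_3} \below e$ on the common interior $[0,v_{i_1,x})$; passing to the limit at $x = v_{i_1,x}$ and using $e^-(v_{i_1,x}) = v_{i_1,y}$ gives $f^-(v_{i_1,x}) \le v_{i_1,y}$, contradicting $v_{i_1} \below f$. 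For case (d), region B always contributes exactly one crossing since $e^+$ starts below $f^-$ at $x = v_{i_2,x}$ and ends above at $x = v_{i_3,x}$; letting $y_e, y_f$ denote the common $y$-coordinates at which $e, f$ meet $l_{x=0}$, a split on $y_e$ versus $y_f$ shows that a further crossing must occur in region A when $y_e > y_f$ and in region C when $y_e < y_f$. Either way $e$ and $f$ cross at least twice, violating simplicity.

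With cases (c) and (d) excluded, case (a) gives the ``Furthermore'' (both $v_{i_1}, v_{i_2} \ab f$), and case (b) gives the stated equivalences of (1), (2), (3). In both surviving cases $v_{i_1} \ab f$, so the sandwich $f \below v_{i_1}v_{i_3} \below e$ holds by applying Observation \ref{trivial} to $(v_{i_1}, v_{i_2}, v_{i_3})$ and $(v_{i_3}, v_{i_4}, v_{i_1})$, yielding $y_f \le y_e$. In case (a), a region-by-region check then shows $e$ stays on the same side of $f$ everywhere, so $e$ and $f$ are disjoint. In case (b), the switch of $e^+$ with $f^-$ across region B produces the unique crossing, which is exactly (2); moreover $y_f \le y_e$ together with the case-(b) assumptions gives $f^- \below e^-$ on the common range, the third clause of (3). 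Since each of (1), (2), (3) is thus equivalent to ``case (b) holds'', they are pairwise equivalent. The main obstacle is choosing the correct vertex triples in each invocation of Observation \ref{trivial} so that each sandwich orients the right way; the parenthetical hypothesis $v_{i_3}, v_{i_4} \ab e$ is handled symmetrically by reflecting $y$-coordinates.
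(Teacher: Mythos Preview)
Your proof is correct and shares the paper's core device---inserting an auxiliary edge from the complete graph and invoking Observation~\ref{trivial} to sandwich one edge between the other and the auxiliary---but you organise the case analysis differently. The paper fixes the hypothesis that $e$ and $f$ cross and then splits on \emph{where} the (unique) crossing lies among the four half-edge pairings, ruling out $(e)^+\cap(f)^+$ and $(e)^-\cap(f)^-$ via the auxiliary edge $v_{i_2}v_{i_4}$; the ``Furthermore'' and the implications $2\Rightarrow 3\Rightarrow 1$ are dispatched as essentially pictorial. You instead split on the four possible positions of $v_{i_1},v_{i_2}$ relative to $f$, which cleanly partitions the whole statement (equivalences and ``Furthermore'' together) into cases (a)--(d). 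Your elimination of case~(c) is the same sandwich trick with a different auxiliary edge ($v_{i_2}v_{i_3}$ rather than $v_{i_2}v_{i_4}$); your elimination of case~(d) by exhibiting two forced crossings is a genuinely different argument that the paper does not make explicit (it is buried in the ``trivial'' Furthermore). Your route has the advantage of being self-contained and of making the parity structure of the three overlap intervals do all the bookkeeping, so that properties (1)--(3) and the disjointness criterion fall out uniformly; the paper's route is shorter on the page but leans more on the figures.
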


\begin{proof}
Note that the last statement is trivial, it is easy to see that $v_{i_1}v_{i_2}$ and $v_{i_3}v_{i_4}$ do not cross if and only if $v_{i_1},v_{i_2}\ab v_{i_3}v_{i_4}$ (as in Figure \ref{impossibled}). Then let us show the 3 equivalencies. 

It is clear that 2 implies 1. We now assume 1 and show that 2 holds. Hence assume $v_{i_1}v_{i_2}$ and $v_{i_3}v_{i_4}$ cross. Since $v_{i_1}v_{i_2}=(v_{i_1}v_{i_2})^+\cup (v_{i_1}v_{i_2})^-$ and $v_{i_3}v_{i_4}=(v_{i_3}v_{i_4})^+\cup (v_{i_3}v_{i_4})^-$ then in order to show that 2 holds it it enough to show that out of the four possible ways these two edges could cross, three are impossible.  Since $ i_2<i_3$ then it cannot be that $(v_{i_1}v_{i_2})^-$ and $(v_{i_3}v_{i_4})^+$ cross
 . Hence it remains to check that it cannot happen that $(v_{i_1}v_{i_2})^-$ and $(v_{i_3}v_{i_4})^-$ cross or that $(v_{i_1}v_{i_2})^+$ and $(v_{i_3}v_{i_4})^+$ cross. 
 
Assume that $(v_{i_1}v_{i_2})^+$ and $(v_{i_3}v_{i_4})^+$  cross (see Figure \ref{impossibleb}). Hence $v_{i_1}\below v_{i_3}v_{i_4}$ and $v_{i_2}\ab v_{i_3}v_{i_4}$. It is clear that $v_{i_2}$ is related to $v_{i_3}v_{i_4}$ and hence by Observation \ref{trivial} it follows that $v_{i_2}v_{i_4}\ab v_{i_3}v_{i_4}$. Also, $v_{i_1}$ and $v_{i_2}v_{i_4}$ are related, and since $v_{i_1}\below v_{i_3}v_{i_4}$ it follows that $v_{i_1}\below v_{i_2}v_{i_4}$. On the other hand, since $v_{i_4}\below v_{i_1}v_{i_2}$, then by Observation \ref{trivial} it follows that $v_{i_2}v_{i_4}\below v_{i_1}v_{i_2}$ and hence $v_{i_2}v_{i_4} \below v_{i_1}$. Therefore $v_{i_1}\below v_{i_2}v_{i_4} \below v_{i_1}$ which by definition means $(v_{i_1})_y< (l_{x=(v_{i_1})_x}\cap v_{i_2}v_{i_4})_y< (v_{i_1})_y$,  a contradiction.  Note that the same argument works in the case that $(v_{i_1}v_{i_2})^-$ and $(v_{i_3}v_{i_4})^-$cross. 

 If $(v_{i_1}v_{i_2})^+$ and $(v_{i_3}v_{i_4})^-$ cross then the edges must look like in the Figure \ref{impossiblea} and hence $v_{i_2}\below v_{i_3}v_{i_4}, v_{i_1}\ab v_{i_3}v_{i_4}$ and $(v_{i_3}v_{i_4})^-\below (v_{i_1}v_{i_2})^-$. Analogously if $v_{i_2}\below v_{i_3}v_{i_4}$ then it is clear that $v_{i_1}v_{i_2}$ and $v_{i_3}v_{i_4}$ must cross, and the equivalencies are proved.
\end{proof}

\begin{figure}
        \centering
        \begin{subfigure}[b]{0.4\textwidth}
                \includegraphics[width=\textwidth]{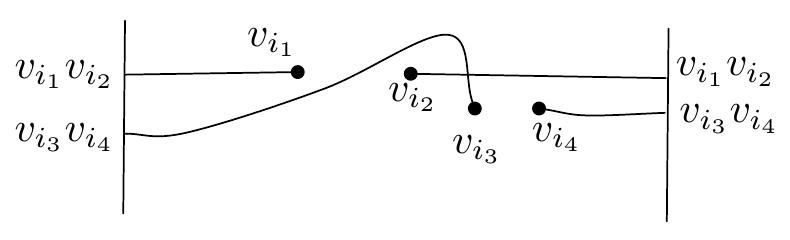}
                \caption{$(v_{i_1}v_{i_2})^+$ and $(v_{i_3}v_{i_4})^-$ cross}
                \label{impossiblea}
        \end{subfigure}%
        ~ 
        \begin{subfigure}[b]{0.4\textwidth}
                \includegraphics[width=\textwidth]{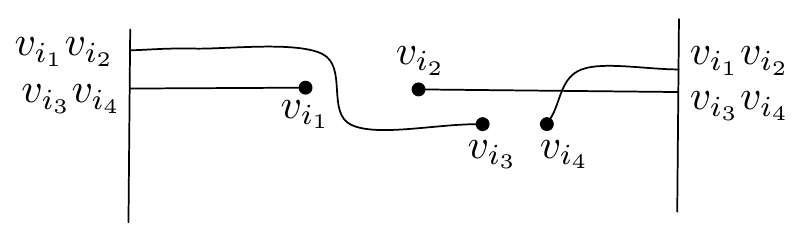}
                \caption{$(v_{i_1}v_{i_2})^+$ and $(v_{i_3}v_{i_4})^+$ cross}
                \label{impossibleb}
        \end{subfigure}
        ~ 
        \begin{subfigure}[b]{0.4\textwidth}
                \includegraphics[width=\textwidth]{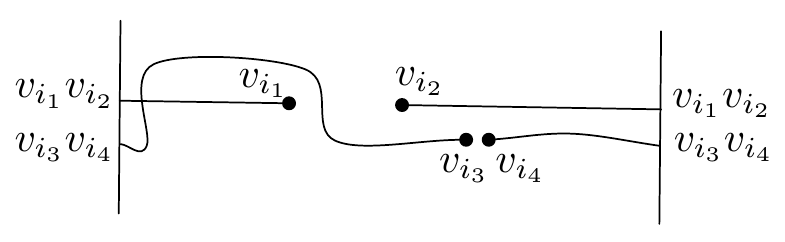}
                \caption{$(v_{i_1}v_{i_2})^-$ and $(v_{i_3}v_{i_4})^-$ cross}
                \label{impossiblec}
        \end{subfigure}
                \begin{subfigure}[b]{0.4\textwidth}
                \includegraphics[width=\textwidth]{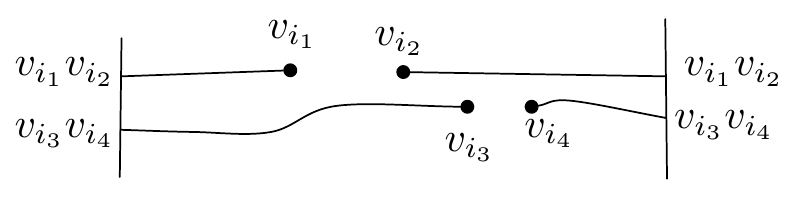}
                \caption{$v_{i_1}v_{i_2}$ and $v_{i_3}v_{i_4}$ are disjoint}
                \label{impossibled}
        \end{subfigure}
        \caption{The different drawings of two edges $v_{i_1}v_{i_2}$ and $v_{i_3}v_{i_4}$ when $v_{i_3},v_{i_4}\below v_{i_1}v_{i_2}$}\label{wrongintersection}
\end{figure}

\begin{lemma}\label{niceorder}
Let $V^+=\{v_i: v_i\ab v_1v_2\}$ then for $v_{i_1},v_{i_2}\in V^+$ with $i_1<i_2$ we have $v_1v_{i_1}\below v_1v_{i_2}$. Analogously, let $V^-=\{v_i: v_i\below v_1v_2\}$ then for $v_{i_1},v_{i_2}\in V^-$ with $i_1<i_2$ we have $v_1v_{i_1}\ab v_1v_{i_2}$ (see Figure \ref{corollary10}).
\end{lemma}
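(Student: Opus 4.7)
My plan is to prove the $V^+$-statement by contradiction, routing the argument through the auxiliary edge $v_{i_1}v_{i_2}$; the $V^-$-statement then follows by the symmetric $y$-reflection argument, so I focus on $V^+$.

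The key preliminary step is to establish $v_1v_{i_1}\below v_{i_1}v_{i_2}$. Since $1<2<i_1<i_2$ and $v_{i_1},v_{i_2}\ab v_1v_2$, the pair of edges $v_1v_2$ and $v_{i_1}v_{i_2}$ satisfies the hypotheses of Claim~\ref{impossiblecrossing} in its parenthesized $\ab$-version (with the claim's $(i_1,i_2,i_3,i_4)$ instantiated as $(1,2,i_1,i_2)$). Either these two edges are disjoint, in which case the ``furthermore'' conclusion gives $v_1,v_2\below v_{i_1}v_{i_2}$, or they cross, in which case property $(3)$ gives $v_2\ab v_{i_1}v_{i_2}$ and $v_1\below v_{i_1}v_{i_2}$; in either case $v_1\below v_{i_1}v_{i_2}$. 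Since $(v_1)_x$ lies in the $x$-range $[0,(v_{i_1})_x]$ of $v_{i_1}v_{i_2}$, Observation~\ref{trivial} applied with $(a,b,c)=(v_{i_1},v_{i_2},v_1)$ upgrades this to $v_1v_{i_1}\below v_{i_1}v_{i_2}$.

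For the contradiction, suppose $v_1v_{i_1}\not\below v_1v_{i_2}$. These two edges share $v_1$ and are both circular, so they are related and the reverse inequality $v_1v_{i_2}\below v_1v_{i_1}$ must hold. Because $(v_{i_2})_x>(v_{i_1})_x$ places $v_{i_2}$ in the $x$-range of $v_1v_{i_1}$, Observation~\ref{trivial} with $(a,b,c)=(v_1,v_{i_1},v_{i_2})$ transfers this into $v_{i_2}\below v_1v_{i_1}$. A second application of Observation~\ref{trivial}, now with $(a,b,c)=(v_{i_1},v_1,v_{i_2})$, yields $v_{i_1}v_{i_2}\below v_1v_{i_1}$. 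Combined with the preliminary $v_1v_{i_1}\below v_{i_1}v_{i_2}$, this produces the chain $v_1v_{i_1}\below v_{i_1}v_{i_2}\below v_1v_{i_1}$, which is impossible at any vertical line meeting the interior of both edges (for instance $l_{x=0}$).

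The main point requiring care is the bookkeeping of which vertex-edge pairs are actually related: many natural candidates fail because the vertex's $x$-coordinate falls inside the gap of the edge's $x$-range. For example, $v_{i_1}$ is \emph{not} related to $v_1v_{i_2}$ since $(v_{i_1})_x$ lies in the excluded interval $((v_1)_x,(v_{i_2})_x)$, which is precisely why the contradiction has to be routed through the third edge $v_{i_1}v_{i_2}$ rather than obtained by a direct comparison of $v_{i_1}$ against $v_1v_{i_2}$.
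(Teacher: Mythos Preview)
Your proof is correct and follows essentially the same approach as the paper's: both use Claim~\ref{impossiblecrossing} to pin down the position of $v_1$ relative to $v_{i_1}v_{i_2}$ unconditionally, and then use Observation~\ref{trivial} together with the contradiction hypothesis to derive the opposite relation. The only cosmetic differences are that the paper argues the $V^-$ case and phrases the final contradiction at the point level ($v_1$ both above and below $v_{i_1}v_{i_2}$), whereas you treat $V^+$ and phrase it at the edge level ($v_1v_{i_1}$ and $v_{i_1}v_{i_2}$ each below the other); these are equivalent via one more application of Observation~\ref{trivial}.
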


\begin{proof}
The proof of the two different cases is exactly the same. For a contradiction let $v_{i_1},v_{i_2}\in V^-$ with $i_1<i_2$ and such that $v_1v_{i_1}\below v_1v_{i_2}$ (see Figure \ref{lemma9}). Note that since  $v_{i_1},v_{i_2}\in V^-$ it follows from Observation \ref{trivial} that $v_1v_2 \ab v_1v_{i_1}$ and $v_1v_2 \ab v_1v_{i_2}$.
It is clear that $v_{i_2}\ab v_1v_{i_1}$ and hence, by Observation \ref{trivial}, it follows that $v_{i_1}v_{i_2}\ab v_1v_{i_1}$. Since $v_1$ is related with $v_{i_1}v_{i_2}$ it follows that $v_{i_1}v_{i_2}\ab v_1$. On the other hand, by Claim \ref{impossiblecrossing}, it follows that regardless whether $v_1v_2$ and $v_{i_1}v_{i_2}$ cross or not $v_{i_1}v_{i_2}\below v_1$, a contradiction. 

\end{proof}

\begin{figure}
        \centering
        \begin{subfigure}[b]{0.4\textwidth}
                \includegraphics[width=\textwidth]{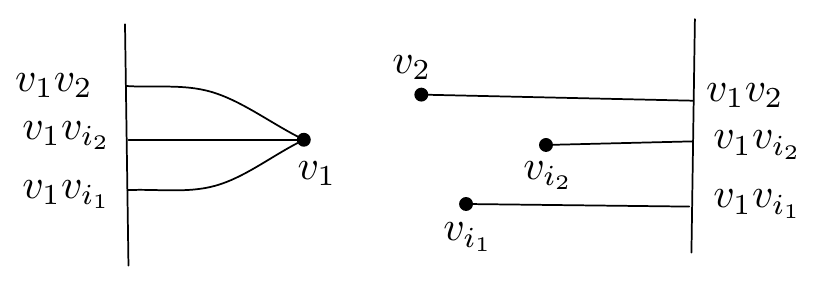}
                \caption{$v_1v_{i_1}\below v_1v_{i_2}$ as in the proof of Lemma \ref{niceorder}}
                \label{lemma9}
        \end{subfigure}%
        ~ 
        \begin{subfigure}[b]{0.4\textwidth}
                \includegraphics[width=\textwidth]{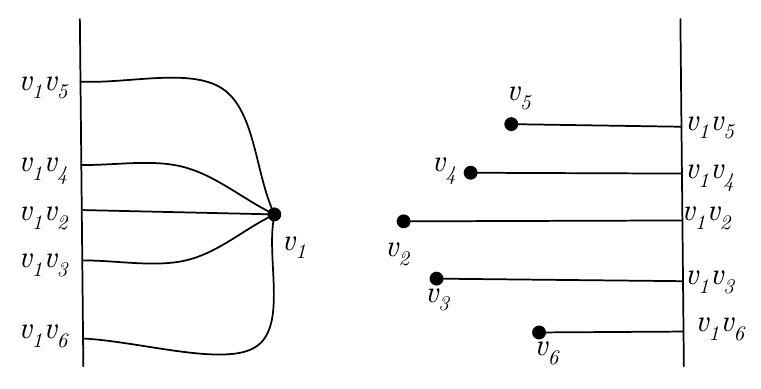}
                \caption{Lemma \ref{niceorder}}
                \label{corollary10}
        \end{subfigure}
        ~ 
    \label{pyramid}
    \caption{Figures for Lemma \ref{niceorder}.}
\end{figure}

For two vertices, $v_iv_j$, $i<j$ let $V^+(v_iv_j)=\{v_s: s>j, v_s\ab v_iv_j\}$ and $V^-(v_iv_j)=\{v_s: s>j, v_s\below v_iv_j\}$. Note that in the definition of $V^+(v_iv_j)$ ($V^-(v_iv_j)$) we only consider the vertices above (below) $v_iv_j$ that have $x$-coordinate bigger than $v_j$. This is because on each subsequent iteration we will only consider vertices with bigger $x$-coordinate.  We say that $v_iv_j$ is \emph{separating} if $|V^+(v_iv_j)|>1$ and $|V^-(v_iv_j)|>1$, see Figure \ref{separating}. 
 
 \begin{figure}
\begin{center}
\includegraphics[width=1\columnwidth]{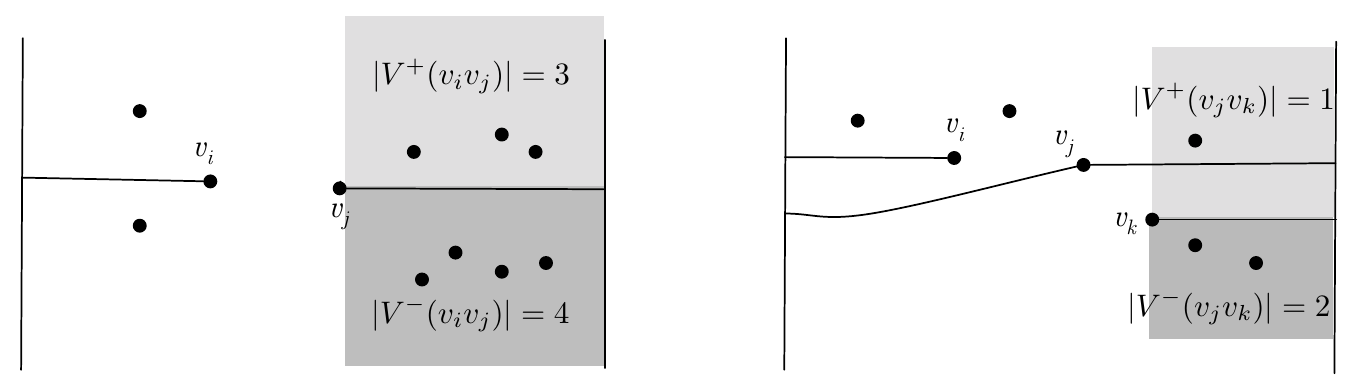}
\end{center}
\caption{A separating edge $v_iv_j$ on the left, a good upper triplet $(v_i,v_j,v_k)$ on the right. }
\label{separating}
\end{figure}

The following three claims will form the core of the proof of Theorem \ref{flagstrong}. 
 \begin{claim}\label{separatingedge}
If $v_iv_j$ is a separating edge then $G[V^+(v_iv_j)]$ and $G[V^-(v_iv_j)]$ are disjoint. Furthermore if $e$ is an edge in $G[V^+(v_iv_j)]$ and $f$ is an edge in $G[V^-(v_iv_j)]$ then $e\ab f$. 
\end{claim}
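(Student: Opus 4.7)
Fix $e = v_a v_b \in G[V^+(v_iv_j)]$ and $f = v_c v_d \in G[V^-(v_iv_j)]$ with $a < b$ and $c < d$; the goal is to prove $e$ and $f$ disjoint and $e \ab f$. My strategy is to establish the pointwise inequality $v_av_b > v_cv_d$ at every endpoint of $e$ or $f$ that lies in the other edge's $x$-range, and then rule out crossings inside the overlap by a sign-consistency argument.

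The pivot step applies Claim \ref{impossiblecrossing} to $v_iv_j$ paired with each of $e$ and $f$. For $(v_iv_j, v_av_b)$ the hypothesis $v_a, v_b \ab v_iv_j$ holds by definition of $V^+$, and whether the alternative is ``disjoint'' or ``crossing'' the conclusion forces $v_i \below v_av_b$; symmetrically $v_i \ab v_cv_d$. Using $v_a,v_b\in V^+$, $v_c,v_d\in V^-$ and Observation \ref{trivial} one has $v_iv_c, v_iv_d \below v_iv_j \below v_iv_a, v_iv_b$, and the remark after Observation \ref{trivial} (applied along the common line $l_{x=0}$, which meets every flag edge) upgrades this to $v_iv_c, v_iv_d \below v_iv_a, v_iv_b$. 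Combining the pivot $v_i \ab v_cv_d$ with Observation \ref{trivial} (sharing $v_d$, resp.\ $v_c$) yields $v_cv_d \below v_iv_d$ (resp.\ $v_cv_d \below v_iv_c$).

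Next, for each endpoint $v_s \in \{v_a,v_b,v_c,v_d\}$ that lies in the other edge's $x$-range, derive $v_av_b(v_s) > v_cv_d(v_s)$ as follows. When the four indices $\{i,s\}\cup\{p,q\}$ (with $\{p,q\}=\{c,d\}$ if $v_s$ is an endpoint of $e$, and $\{p,q\}=\{a,b\}$ otherwise) admit the consecutive order of Claim \ref{impossiblecrossing}, apply it directly: e.g.\ $(v_iv_a, v_cv_d)$ in the subcase $a < c$ has hypothesis $v_c, v_d \below v_iv_a$ from the transitivity above, its crossing alternative is contradicted by $v_i \ab v_cv_d$, and the disjoint conclusion yields $v_a \ab v_cv_d$. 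When the indices nest or interleave, fall back on a chain: for $v_s \in\{v_a,v_b\}$ use $v_cv_d(v_s) < v_iv_{c\text{ or }d}(v_s) < v_iv_s(v_s) = (v_s)_y = v_av_b(v_s)$, taking whichever of $v_iv_c, v_iv_d$ is defined at $v_s$; symmetrically $(v_s)_y = v_iv_s(v_s) < v_iv_{a\text{ or }b}(v_s) < v_av_b(v_s)$ for $v_s \in\{v_c,v_d\}$. Either route yields $v_av_b > v_cv_d$ at $v_s$. The overlap of the two edges' $x$-ranges forms at most two arcs on the cylinder, each bounded by two such $v_s$; since the simple-graph hypothesis allows at most one crossing of $e$ with $f$ and the strict inequality at both endpoints of each arc forbids an intervening sign flip, no crossing can occur. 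Hence $e$ and $f$ are disjoint, and the inequality $v_av_b > v_cv_d$ throughout the overlap is exactly $e \ab f$.

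The principal obstacle is the case analysis on the relative order of $a,b,c,d$, which dictates which direct applications of Claim \ref{impossiblecrossing} are available, which chain-intermediary among $v_iv_c,v_iv_d,v_iv_a,v_iv_b$ is defined at each evaluation point, and which endpoints actually bound the overlap arcs. The interleaving case $a < c < b < d$ is particularly delicate since $v_b$ and $v_c$ fall in each other's $x$-gap and the relations $v_b \ab v_cv_d$, $v_c \below v_av_b$ are not even meaningful, but this is compensated by the fact that the overlap collapses to a single wrap-around arc bounded only by $v_a$ and $v_d$, so only two endpoints need checking; an analogous simplification occurs in the nested case $a < c < d < b$. Verifying the availability of a valid derivation for every overlap-arc endpoint in every ordering — and confirming that every use of transitivity-of-$\below$ has its common vertical line at $l_{x=0}$, guaranteed by the flag hypothesis — is the bulk of the work.
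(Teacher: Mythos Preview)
Your argument has a genuine gap at the step you label as the ``direct application'' of Claim~\ref{impossiblecrossing}. In the example you give --- applying the claim to the pair $(v_iv_a,\,v_cv_d)$ with $i<a<c<d$ and hypothesis $v_c,v_d\below v_iv_a$ --- you assert that ``its crossing alternative is contradicted by $v_i\ab v_cv_d$.'' But read property~3 of Claim~\ref{impossiblecrossing} again: in the crossing case it says $v_{i_1}\ab v_{i_3}v_{i_4}$, i.e.\ $v_i\ab v_cv_d$. This is \emph{exactly} what your pivot established, not its negation; the information $v_i\ab v_cv_d$ is consistent with both the crossing and the disjoint outcome (only the position of $v_{i_2}=v_a$ distinguishes them, and that is what you are trying to determine). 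So the crossing case is not ruled out, and you cannot conclude $v_a\ab v_cv_d$. The symmetric instance --- $(v_iv_c,\,v_av_b)$ with $i<c<a<b$ --- fails in the same way: $v_i\below v_av_b$ is what property~3 asserts for $v_{i_1}$, not what it forbids.

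Your chain fallback does not rescue these cases either. When $a<c<d$, neither auxiliary edge $v_iv_c$ nor $v_iv_d$ is defined at $a_x$: each is a circular edge with $x$-range $[0,i_x]\cup[c_x,1)$ (resp.\ $[0,i_x]\cup[d_x,1)$), and $a_x$ lies in the gap $(i_x,c_x)$. Likewise, when $c<a<b$, neither $v_iv_a$ nor $v_iv_b$ is defined at $c_x$. So in precisely the ``consecutive'' cases where you invoke the direct application, both routes fail. The paper's proof avoids this obstruction by bringing in $v_j$ as well as $v_i$, and by working not with an arbitrary $v_a\in V^+$ but with the leftmost vertices $u\in V^+$ and $w\in V^-$: it first shows, by a short contradiction argument using the edge $v_ju$, that $v_iu$ is disjoint from every edge of $G[V^-]$, and then uses $v_iu$ and $v_iw$ as barriers in an extension argument. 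Your single-pivot strategy through $v_i$ alone does not carry enough information to separate the two sides.
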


\begin{proof}
 Refer to Figure \ref{inductiona}. 
  Let   
 $u$ denote the vertex in $V^+(v_iv_j)$ with smallest $x$-coordinate and, $w$ the vertex in $V^-(v_iv_j)$ with smallest $x$ coordinate. We first show that the edge $v_iu$ is disjoint from all the edges in $G[V^-(v_iv_j)]$ and that $v_iw$ is disjoint from all the edges in $G[V^+(v_iv_j)]$. 
 
 The cases are analogous hence we just show that the edges in $G[V^-(v_iv_j)]$ are disjoint from $v_iu$. For a contradiction let $e=ab$ be an edge with endpoints in $V^-(v_iv_j)$ such that $e$ and $v_iu$ cross. Note that from Observation \ref{trivial} it follows that $v_iv_j\below v_iu$.  By Claim \ref{impossiblecrossing} applied to $v_i,v_j,a,b$ it follows that $e\below v_i$. It follows that the crossing of $e$ and $v_iu$ must be between $(v_iu)^+$ and $e^-$ (see definitions at the beginning of this section) and hence $u_x\leq a_x,b_x$.  By Observation \ref{trivial} it follows that $v_ju \ab v_iv_j$ and hence $a,b\below v_ju$. Therefore, by Claim \ref{impossiblecrossing} applied to $v_j,u,a,b$ it follows that $ab\below v_j$. Again, by Claim \ref{impossiblecrossing} the latter means that $ab$ and $v_iv_j$ don't cross from which it follows, because  $v_iv_j\below v_iu$, that $e$ and $v_iu$ do not cross, a contradiction.


Now to prove the first part of the claim, 
let $e=ab$ be an edge with endpoints in $V^-(v_iv_j)$ and $f=cd$ be an edge with endpoints in $V^+(v_iv_j)$.  Assume that $c$ has smaller $x$-coordinate then $d$. Either $c=u$ or by Lemma \ref{niceorder} and Observation \ref{trivial} it follows that $c\ab v_iu$. Regardless of $c$, by Lemma \ref{niceorder} $d\ab v_iu$ and $v_iu\ab v_iv_j$. Hence, it is clear that we can draw an extension $f'$ of $f$ such that $f'-f\ab v_iu$. Similarly, assume $a$ has smaller $x$-coordinate than $b$ then either $a=w$ or $a\below v_iw$. Furthermore, $b\below v_iw$ and $v_iw\below v_iv_j$ and we can draw an extension $e'$ of $e$ such that $e'-e\below v_iw$.
Let $\theta$ be a curves from $v_i$ to $u$ such that $\theta\cup v_iu$ is an extension of $v_iu$ and furthermore $v_iv_j\below (\theta\cup v_iu)$. If $e,f$ cross then it follows that since $e'$ and $f'$ are closed curves then they cross an even number of times, hence $e'$ and $f'$ cross at least twice. Since $e$ and $f$ can cross only once and since it is clear that $e'-e$ does not cross $f'-f$ it follows that either $e$ crosses $f'-f$ or $f$ crosses $e'-e$. Assume first that $e$ crosses $f'-f$.  Since $f-f'$ is above $\theta\cup v_iu$ it follows that $e$ must cross $\theta\cup v_iu$. Furthermore, for $e$ to cross $f-f'$ it follows that $e$ must cross $v_iu$, otherwise no point of $e$ lies above $v_iu$. This is a contradiction, as we have seen that $e$ and $v_iu$ do not cross.  Similarly it follows that $f$ and $e'-e$ cannot cross. Hence $e$ and $f$ do not cross.

From the previous paragraph it follows that $e'$ and $f'$ do not cross this, together with the fact that $e$ and $f$ are circular implies that $e$ and $f$ are related. Hence $e \below f$ (i.e. $f\ab e$). 
\end{proof}

 For $i<j<k$ if $v_jv_k\below v_iv_j$ and $|V^+(v_jv_k)|\leq 1$ then we say that $(v_i,v_j,v_k)$ form a \emph{good upper triplet} of vertices. Similarly if $v_jv_k\ab v_iv_j$ and $|V^-(v_jv_k)|\leq 1$ then we say that $v_i,v_j,v_k$ form a \emph{good lower triplet} of vertices, see Figure \ref{separating}. 
 
 \begin{claim}\label{triplet}
If $(v_i,v_j,v_k)$ forms a good upper (lower) triplet of vertices then $G[V^-(v_jv_k)]$ ($G[V^+(v_jv_k)]$) is disjoint of $v_iv_j$. Furthermore, $v_iv_j$ is related to all edges of $G[V^-(v_jv_k)]$ ($G[V^+(v_jv_k)]$).
\end{claim}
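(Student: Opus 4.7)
The plan is to fix an edge $e=ab \in G[V^-(v_jv_k)]$ with $a_x<b_x$ (so both indices exceed $k$), and show that $e$ is simultaneously disjoint from and related to $v_iv_j$. Two successive applications of Claim~\ref{impossiblecrossing} will do all the work.

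First I would establish that both endpoints of $e$ lie below $v_iv_j$. Since $a_x>(v_k)_x>(v_j)_x>(v_i)_x$, the vertical line $l_{x=a_x}$ meets both $(v_jv_k)^+$ and $(v_iv_j)^+$ in their relative interiors, so the given relation $v_jv_k \below v_iv_j$ yields $a_y<(l_{x=a_x}\cap v_jv_k)_y\leq(l_{x=a_x}\cap v_iv_j)_y$; the same reasoning applies at $x=b_x$. Hence $a,b \below v_iv_j$, so by the ``furthermore'' clause of Claim~\ref{impossiblecrossing} it suffices to verify that $v_i,v_j \ab ab$ in order to conclude disjointness.

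To establish $v_j \ab ab$, I would argue by contradiction: if instead $v_j \below ab$, then Claim~\ref{impossiblecrossing} applied to the quadruple $(v_j,v_k,a,b)$ (whose hypothesis $a,b \below v_jv_k$ is exactly the $V^-$ membership) rules out disjointness, so $v_jv_k$ and $ab$ must cross; but condition~(3) of the equivalence then forces $v_j \ab ab$, a contradiction. With $v_j \ab ab$ in hand, the same scheme applied to $(v_i,v_j,a,b)$ yields $v_i \ab ab$: if $v_i \below ab$ then the two edges would again be non-disjoint and hence cross, but condition~(3) would now demand $v_j \below ab$, contradicting what was just proved.

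At this point Claim~\ref{impossiblecrossing} gives that $v_iv_j$ and $ab$ are disjoint. To upgrade this to ``related,'' I invoke Observation~\ref{aboutrelated}: two disjoint circular edges are either related or are in the alternating configuration in which one endpoint of an edge lies above, and the other below, the second edge; since both $v_i$ and $v_j$ lie above $ab$, the alternating case is excluded, so $v_iv_j$ and $ab$ are related (with $ab \below v_iv_j$). The good lower triplet statement follows by reversing $\below$ and $\ab$ everywhere. The one point worth flagging is that this argument never actually invokes the cardinality part $|V^+(v_jv_k)|\leq 1$ of the definition of a good triplet; only $v_jv_k \below v_iv_j$ is used, so that hypothesis is presumably retained for the downstream use of Claim~\ref{triplet} rather than for the proof itself, and this is the subtlety I would double-check before committing to the argument above.
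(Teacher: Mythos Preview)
Your proof is correct and follows essentially the same two-step approach as the paper: apply Claim~\ref{impossiblecrossing} to $(v_j,v_k,a,b)$ to force $v_j\ab ab$, then to $(v_i,v_j,a,b)$ to conclude disjointness. The paper's version is terser---it simply notes that $v_{i_1}v_{i_2}\below v_j$ in all cases and then that condition~(3) for $(v_i,v_j,v_{i_1},v_{i_2})$ fails---and it omits the relatedness argument entirely, whereas you supply it explicitly via Observation~\ref{aboutrelated}; your remark that the cardinality bound $|V^+(v_jv_k)|\le 1$ plays no role here is also correct.
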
 
\begin{proof}

Refer to Figure \ref{inductionb}. Suppose that $(v_i,v_j,v_k)$ form a good upper triplet an let $e=v_{i_1}v_{i_2}$ in $G[V^-(v_jv_k)]$ with $i_1<i_2$. Because $v_{i_1},v_{i_2}\below v_jv_k$ by Claim \ref{impossiblecrossing} it follows that $v_{i_1}v_{i_2}\below v_j$. Hence, by Claim \ref{impossiblecrossing}, $v_{i_1}v_{i_2}$ does not cross $v_iv_j$.

\end{proof}


\begin{claim}\label{inductionlemma}
Among $\{v_1,v_2,v_3,v_4,v_5,v_6\}$ there is either a separating edge, a good upper triplet of vertices or a good lower triplet of vertices.
\end{claim}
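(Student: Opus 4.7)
The plan is to case-split on the behavior of the edge $v_1 v_2$. If $v_1 v_2$ is itself separating, we are done. Otherwise, either $|V^+(v_1 v_2)| \leq 1$ or $|V^-(v_1 v_2)| \leq 1$; by the up--down symmetry of the setup it suffices to treat the former case and produce a good upper triplet, with the mirror argument producing a good lower triplet.

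So assume $|V^+(v_1 v_2)| \leq 1$. Then at most one of $v_3, v_4, v_5, v_6$ lies above $v_1 v_2$, so at least three lie below, and I may pick indices $3 \leq a < b \leq 6$ with $v_a, v_b \in V^-(v_1 v_2)$. The candidate configuration is $(v_1, v_a, v_b)$, and I claim it is a good upper triplet. The first required condition, $v_a v_b \below v_1 v_a$, is routine: Lemma \ref{niceorder} applied to the pair $v_a, v_b$ gives $v_1 v_a \ab v_1 v_b$, whence $v_b \below v_1 v_a$ by Observation \ref{trivial}, and a second application of Observation \ref{trivial} (to the edges $v_a v_b$ and $v_1 v_a$, which share $v_a$) gives $v_a v_b \below v_1 v_a$.

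The main obstacle is the second condition, $|V^+(v_a v_b)| \leq 1$. My plan is to establish the containment $V^+(v_a v_b) \subseteq V^+(v_1 v_2)$, from which the bound is immediate. Fix $v_s$ with $s > b$ and $v_s \ab v_a v_b$, and suppose for contradiction that $v_s \below v_1 v_2$. Then Lemma \ref{niceorder} applied to the pair $\{v_b, v_s\} \subseteq V^-(v_1 v_2)$, followed by Observation \ref{trivial}, gives $v_s \below v_1 v_b$; the same lemma applied to $\{v_a, v_b\}$ gives $v_a \ab v_1 v_b$, which via Observation \ref{trivial} (at the common vertex $v_b$) yields $v_a v_b \ab v_1 v_b$. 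Since $v_a v_b$ and $v_1 v_b$ share $v_b$ and are circular (as $G$ is a flag), they are related, so the inequality $v_a v_b \ab v_1 v_b$ holds at the vertical line through $v_s$; chaining this with $v_s \below v_1 v_b$ on that line forces $v_s \below v_a v_b$, contradicting the choice of $v_s$. Hence $V^+(v_a v_b) \subseteq V^+(v_1 v_2)$, and $(v_1, v_a, v_b)$ is a good upper triplet, completing the argument.
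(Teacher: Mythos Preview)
Your argument breaks down at the step ``the same lemma applied to $\{v_a,v_b\}$ gives $v_a \ab v_1v_b$''. Recall that in a flag the circular edge $v_1v_b$ occupies only the $x$-ranges $[0,(v_1)_x]\cup[(v_b)_x,1)$; since $1<a<b$ we have $(v_a)_x\in((v_1)_x,(v_b)_x)$, so the vertical line through $v_a$ does \emph{not} meet $v_1v_b$ at all. Thus $v_a$ and $v_1v_b$ are not related, Observation~\ref{trivial} does not apply, and the relation $v_a\ab v_1v_b$ is undefined. Worse, the intended conclusion $v_av_b\ab v_1v_b$ is actually false: applying Claim~\ref{impossiblecrossing} to the indices $1<2<a<b$ (with $v_a,v_b\below v_1v_2$) gives $v_1\ab v_av_b$ in every case, and then Observation~\ref{trivial} at the common vertex $v_b$ yields $v_1v_b\ab v_av_b$, i.e.\ $v_av_b\below v_1v_b$. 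So your chaining step collapses: both $v_s$ and $v_av_b$ lie \emph{below} $v_1v_b$, and you cannot conclude $v_s\below v_av_b$ from that.

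More fundamentally, the containment $V^+(v_av_b)\subseteq V^+(v_1v_2)$ that you are aiming for is not something the tools in the paper give you. Lemma~\ref{niceorder} controls only edges emanating from $v_1$; it says nothing that forces a vertex $v_s\in V^-(v_1v_2)$ with $s>b$ to lie below the edge $v_av_b$. All you know is that $v_s$ and $v_av_b$ are both below $v_1v_a$, which does not order them. The paper's proof avoids this difficulty by proceeding iteratively: it examines $v_1v_2$, then $v_2z_3$, then $z_3z_4$, at each stage either finding a separating edge, extracting a good triplet directly from the current ``few vertices on one side'' hypothesis, or passing to the next edge; the final good upper triplet $(z_1,z_3,z_4)$ uses the extra hypothesis $|V^+(z_3z_4)|\le 1$ obtained along the way, rather than trying to deduce such a bound from $|V^+(v_1v_2)|\le 1$ alone.
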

\begin{proof}
Assume for a contradiction that the claim is not true. Let $z_1=v_1$ and $z_2=v_2$. Then it follows that either $|V^+(z_1z_2)| \leq 1$ or $|V^-(z_1z_2)| \leq 1$. Assume, without loss of generality that  $|V^+(z_1z_2)| \leq 1$. Let $z_3$ be the vertex with smallest $x$ coordinate in $V^-(z_1z_2)$, hence it is clear that $z_3 \in\{v_3,v_4\}$. Analogously,  for $z_2z_3$ either $|V^+(z_2z_3)| \leq 1$ or $|V^-(z_2z_3)| \leq 1$. By Observation \ref{trivial} $z_2z_3\below z_1z_2$ and hence if $|V^+(z_2z_3)|\leq 1$ then $z_1,z_2,z_3$ is a good upper triplet. So let us assume that  $|V^-(z_2z_3)| \leq 1$. 

Let $z_4$ be the vertex with smallest $x$-coordinate in $V^+(z_2z_3)\setminus V^+(z_1z_2)$ (note that $V^+(z_2z_3)\setminus V^+(z_1z_2)\subseteq V^-(z_1z_2)$).  Since $z_3\in\{v_3,v_4\}$ it follows that $z_4 \in\{v_4,v_5,v_6\}$.  Analogously to the previous analysis we know that if $|V^-(z_3z_4)| \leq 1$ then $z_2,z_3,z_4$ is a good lower triplet. Hence we may assume that $|V^+(z_3z_4)| \leq 1$. We claim that $z_1,z_3,z_4$ is a good upper triplet. From Lemma \ref{niceorder} since $z_3,z_4\leq z_1z_2$ it follows that $z_1z_3\ab z_1z_4$ and hence $z_4\below z_1z_3$. It then follows by Observation \ref{trivial} that $z_3z_4\below z_1z_3$. Hence $z_1,z_3,z_4$ is a good upper triplet.


  \end{proof}

\begin{figure}
        \centering
        \begin{subfigure}[b]{0.4\textwidth}
                \includegraphics[width=\textwidth]{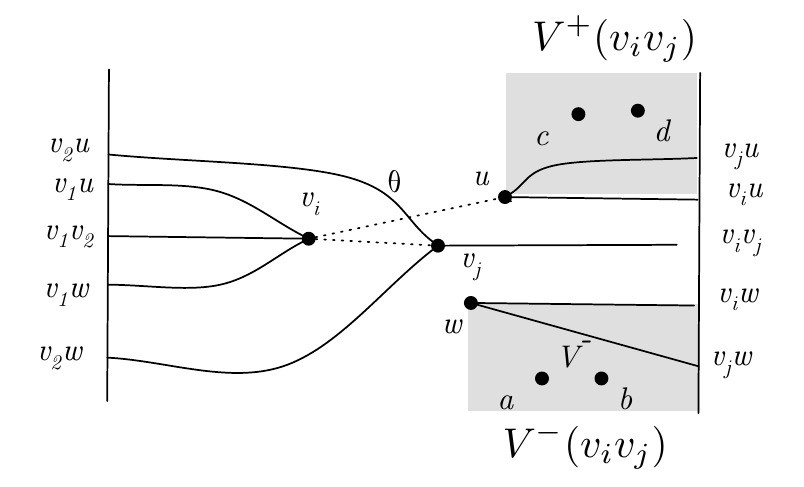}
                \caption{Drawing for Claim \ref{separatingedge}.}
                \label{inductiona}
        \end{subfigure}%
        ~ 
        \begin{subfigure}[b]{0.4\textwidth}
                \includegraphics[width=\textwidth]{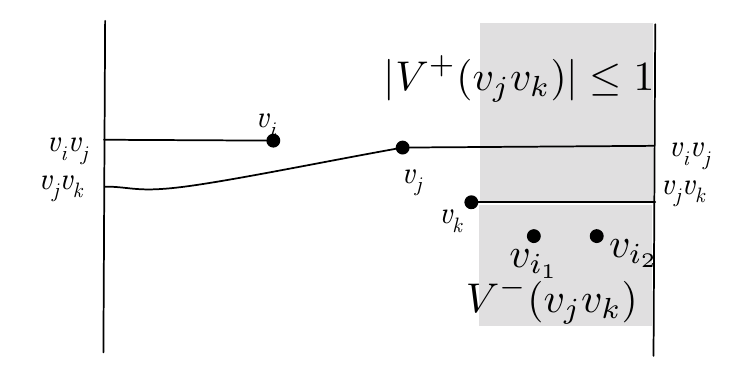}
                \caption{Drawing for Claim \ref{triplet}}
                \label{inductionb}
        \end{subfigure}
        ~ 
        \caption{ }
    \label{inductionfig}
\end{figure}

\textbf{Remark.} We note that for the proofs of the previous three claims all of the properties of a flag are used: that is the fact that a flag is a complete monotone simple cylindrical graph for which the line $l_{x=0}$ intersects the relative interior of all of its edges. The removal of any of these properties would render these claims false.

\subsection{Proof of Theorem \ref{flagstrong}}
 The proof of Theorem \ref{flagstrong} follows directly by an induction that combines Claims \ref{separatingedge}, \ref{triplet} and \ref{inductionlemma}. Some care needs to be done for the base case of the induction. 

\begin{proof}

 We will prove the theorem by induction on the number of vertices. For the base case let us prove the theorem for  $V(G)=10$, hence we need to prove that there are 2 disjoint edges. 
 We first use Claim \ref{inductionlemma} and hence we either have a separating edge, a good upper triplet or a good lower triplet among the six vertices with smallest $x$-coordinate.
 
First we will assume that we have a separating edge $v_i,v_j$ among the vertices $v_1,v_2,v_3,v_4,v_5,v_6$. Recall that this means that $|V^+(v_iv_j)|>1$ and $|V^-(v_iv_j)|>1$. By Claim \ref{separatingedge} all edges in $G[V^+(v_iv_j)]$ are disjoint from all of the edges in $G[V^-(v_iv_j)]$. 
Hence we can pick any edge in $G[V^-(v_iv_j)]$ and any edge in $G[V^+(v_iv_j)]$ and furthermore, by Claim \ref{separatingedge} this pair of edges is indeed proper set of disjoint edges since $v_iv_j$ has both endpoints to the left of the endpoints of both of the edges just picked.

Now assume that $v_i,v_j,v_k$ is a good upper triplet, by  Claim \ref{triplet} no edge in $ G[V^-(v_jv_k)]$ crosses $v_iv_j$ and since $|V^+(v_iv_j)|\leq 1$ it follows that there must be at least $10-6-1=3$ vertices in $ G[V^-(v_jv_k)]$. Hence any edge in $ G[V^-(v_jv_k)]$ together with $v_iv_j$ form a set of 2 pairwise disjoint edges, furthermore $v_iv_j$ has endpoints to the left of any edge in  $ G[V^-(v_jv_k)]$, it follows from the latter that the two edges are related and hence the set of 2 disjoint edges is proper. 

The case when $v_i,v_j,v_k$ is a good lower triplet is analogous to the latter, and the theorem is true for any graph with 10 vertices.. 

 Having proved the base case, we note that this takes care of any graph with at most 25 vertices: for these graphs we must only find a proper set of 2 pairwise disjoint edges. 
Hence, assume that $|V(G)|>25$. Now let us prove the induction step. 

Again we use Claim \ref{inductionlemma}. So assume $v_iv_j$ is a separating edge with endpoints in $v_1,...,v_6$, then, 
by Claim \ref{separatingedge}, no edge of $G[V^+(v_iv_j)]$ crosses an edge of $G[V^-(v_iv_j)]$. 
By our inductions hypotheses if both $V^-(v_iv_j)$ and $V^+(v_iv_j)$ have both at least 10 vertices then we can find a set with 

 \[ \left \lceil \dfrac{|V^+(v_iv_j)|}{25}\right \rceil +1+\left \lceil \dfrac{|V^-(v_iv_j)|}{25}\right \rceil +1 \geq \left \lceil \dfrac{|V^+(v_iv_j)|+|V^-(v_iv_j)|}{25} \right \rceil +2=\left \lceil\dfrac{n-6}{25}\right \rceil +2\geq  \left \lceil\dfrac{n}{25}\right \rceil +1\]
pairwise disjoint edges.

 Now let us analyse the case when one of the sets $V^+(v_iv_j)$ and $V^-(v_iv_j)$ has 9 vertices or less. Assume, without loss of generality, that $V^+(v_iv_j)$ has 9 vertices or less. Since $|V(G)|>25$ it follows that $V^-(v_iv_j)$ has at least 11 vertices.  Take any edge $e$ with both endpoints in $V^+(v_iv_j)$, then by Claim \ref{separatingedge} $e$ is disjoint of all edges in $G[V^-(v_iv_j)]$  and hence using induction we can find at least
 \[ 1+  \left \lceil \dfrac{|V^-(v_iv_j)|}{25}\right \rceil +1\geq\left  \lceil\dfrac{n-15}{25}\right \rceil +2\geq  \left \lceil\dfrac{n}{25}\right \rceil +1\]
 pairwise disjoint edges. 

It is easy to see that the set of disjoint edges that we have found is proper. Namely by our induction hypotheses the sets of edges found in $G[V^+(v_iv_j)]$ and $G[V^+(v_iv_j)]$ are proper. Note that given an edge $e$ in $G[V^-(v_iv_j)]$ and an edge $f$ in $G[V^+(v_iv_j)]$ then $e\below f$ by Claim \ref{separatingedge} and the edge $v_iv_j$ has endpoints to the left of both $e$ and $f$, and furthermore, the endpoints of $e$ lie below $v_iv_j$ and the endpoints of $f$ lie above $v_iv_j$. Hence the union of these two proper sets is indeed a proper set of disjoint edges.

Now assume that Claim \ref{inductionlemma} gives us a good upper triplet $v_i,v_j,v_k$. Then
 by Claim \ref{triplet} no edge in $ G[V^-(v_jv_k)]$ crosses $v_iv_j$ and hence we can apply induction to $ V^-(v_jv_k)$ and add the edge $v_iv_j$ to obtain at least
\[  \left \lceil \dfrac{n-6-1}{25}\right \rceil +1+1\geq \left \lceil\dfrac{n}{25}\right \rceil +1
\]
pairwise disjoint edges. This time $v_iv_j$ has endpoints to the left of any edge from $ G[V^-(v_jv_k)]$. By Claim \ref{triplet} $v_iv_j$ is related to any edge of $ G[V^-(v_jv_k)]$, and hence the set of disjoint edges found is proper. 

The case when Claim \ref{inductionlemma} provides a good lower triplet is similar to the previous case.  The theorem follows. 
\end{proof}

\begin{remark} Note that the proof of Theorem \ref{flagstrong} is also an efficient algorithm to find a set of disjoint edges in a flag: at each iteration we either find a new edge to add to the set of disjoint edges, or split the vertices into two sets into which we may apply induction separately. 
\end{remark}

\section{Proof of Theorem \ref{main2}}

 \begin{figure}
\begin{center}
\includegraphics[width=.7\columnwidth]{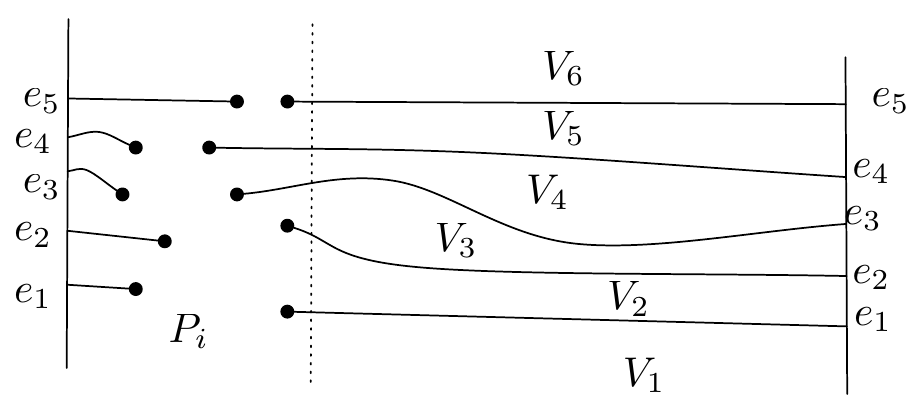}
\end{center}
\caption{The proper set of disjoint edges found on $P_i$, and the partition into $V_i$'s. }
\label{slabs}
\end{figure}

\begin{proof}
Assume $0<\epsilon <\frac 12$ and let $n_0$ be a natural number big enough such that the following two equations hold for all $m\geq n_0$. 

\begin{equation}\label{equ2}
m^{\epsilon^2} \geq 10^6
 \end{equation}

\begin{equation}\label{equ3}
m^\epsilon \geq 10 \lceil \log n_0 \rceil +1 \end{equation}
Let $c=n_0^{\epsilon-1}$ and
 $f(n)=cn^{1-\epsilon}$ . We will show that for any graph with $n$ vertices we can always find a set of at least $\lceil f(n) \rceil$ pairwise disjoint edges. 
 
 We first note some properties of $f$: 
  it is a strictly increasing function with strictly decreasing, positive, first derivative. 
  Hence, for $m>2$ and $0<x<m$ we have
  \begin{equation}\label{aboutf2}
  f(m-x)\geq f(m)-f'(m-x)x.
  \end{equation}
  
  For a fixed $m>2$ let $h(y)=y^{1-\epsilon} +(m-y)^{1-\epsilon}$ then 
$h'(y)=(1-\epsilon)y^{-\epsilon}- (1-\epsilon)(m-y)^{-\epsilon}$. Hence $h'$ is at least zero for all $0< y\leq m/2$. 
It follows that for any $x$ such that $0\leq x<m/2$ and $a,b\geq x$ with $a+b=m$ we have

\begin{equation}\label{aboutf} f(a)+f(b) \geq f(m-x)+f(x). \end{equation}
Finally, note that for $m\geq n_0$, Equation \eqref{equ2} implies that $m^\epsilon\geq 10^6$ and since $\epsilon<1/2$ we have $c=n_0^{\epsilon-1}\leq n_0^{-\epsilon}\leq 10^{-6}$.

We now prove by induction on $n$ that every graph with $n$ vertices has a set of at least $\lceil f(n) \rceil$ pairwise disjoint edges.  For $1<n\leq n_0$ we have $f(n)\leq 1$ and the statement is true. Therefore, assume $n>n_0$  and that the statement holds for all graphs with less than $n$ vertices and let $G$ be a complete monotone simple cylindrical graph with $n$ vertices $v_1,...,v_n$ ordered with increasing $x$-coordinate.

Let $k:=\lfloor \frac{n}{10f(n)}\rfloor$ and note that $k>10$.  Since $n> n_0$ implies $f(n)\geq 1$, we have
\[ \lfloor n/k \rfloor \geq \left \lfloor\frac{n}{n/10f(n)} \right \rfloor  \geq  \lfloor 10f(n) \rfloor \geq 9f(n).\]

Partition $\mathcal C$ with vertical lines such that each part, with the exception of at most one, contains in its interior exactly $k$ vertices, and furthermore no vertex lies in the boundary of any part.
 Let $\mathcal P=\{P_1,...,P_{\lceil n/k\rceil}\}$ be the set consisting of these parts. Let $\mathcal P'$ be the subset of $\mathcal P$  consisting of the sets in $\mathcal P$ which fully contain at least one edge in its interior.  If $|\mathcal P'|\geq f(n)$ we are done.

Therefore, since $\lfloor n/k \rfloor > f(n)$ we may assume that there exists $P_i\in \mathcal P$ with exactly $k$ vertices in its interior, such that no edge is completely contained in $P_i$. Without loss of generality, we may assume that $P_i=\{p\in \mathcal C| 0<p_x<a\}$ for some $a\in (0,1)$, that is $P_i$ is bounded by the vertical lines $l_{x=0}$ and $l_{x=a}$ and $V':=\{v_1,...,v_k\}$ is the set of vertices inside $P_i$. 

Note that all edges in $G[V']$ must intersect $l_{x=0}$, that is $G[V']$ is a flag, hence by Theorem \ref{flagstrong} we can find in $G[V']$ a proper set $\mathcal E$ of $\alpha:=\lceil k/25 \rceil$ pairwise disjoint edges.  Note that since $l_{x=0}$ intersects all edges of $\mathcal E$ then we may order the edges $\mathcal E$ to be $e_1,...,e_\alpha$ in such a way that $e_1\below e_2,...,e_{\alpha-1}\below e_\alpha$.


For every integer $i$ such that $1<i\leq \alpha$ let $V_i$ be the vertices of $V-V'$ that are above $e_{i-1}$ and below $e_i$. Let $V_1$, $V_{\alpha+1}$ be the vertices below $e_1$ and above $e_\alpha$, respectively (see Figure \ref{slabs}). Note that for all $i$, all vertices in $V_i$ have greater $x$ coordinate than all the vertices in $V'$.

\begin{claim}\label{inductionstep}
Let $s$ be an integer such that $1<s<\alpha+1 $. Let $U=\bigcup_{i>s}V_i$ and $W=\bigcup_{i<s}V_i$ then the edges in $G[U]$ are disjoint from the edges in $G[W]$. Furthermore every edge of $G[U]$ is disjoint of $e_{s-1}$ and every edge of $G[W]$ is disjoint of $e_{s}$.  
\end{claim}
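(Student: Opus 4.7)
The plan is to reduce all three disjointness statements to a single sub-lemma: for any circular edge $h$ in the flag and any edge $xy \in E(G)$ with both $x, y \ab h$, the edges $xy$ and $h$ are disjoint (and symmetrically when $x, y \below h$). Applying this sub-lemma with $h = e_{s-1}$ and $xy$ ranging over edges of $G[U]$ will give the second assertion, and with $h = e_s$ and $xy$ ranging over edges of $G[W]$ the third. For the first assertion I will produce a small family of such $h$'s whose $x$-ranges collectively cover $\mathcal C$ and each of which separates $U$ from $W$.

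Before applying the sub-lemma I would verify the hypotheses. If $u \in U$ then $u \ab e_s$ by definition, and since $\mathcal E$ is proper the edges $e_{s-1}, e_s$ are related with $e_{s-1} \below e_s$; consistency of this order on the vertical line through $u$ (where both $e_{s-1}^+$ and $e_s^+$ are defined) yields $u \ab e_{s-1}$ as well. Symmetrically every $w \in W$ lies below both $e_{s-1}$ and $e_s$.

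The sub-lemma itself would be proved by the extension technique. I extend $h$ to a closed $x$-monotone curve $\tilde h = h \cup \eta$ by inserting an $x$-monotone arc $\eta$ across the short arc of $h$ (the $x$-range missing from $h$), choosing $\eta$ with $y$-coordinate strictly above every point of $xy$ in that range (or below, in the symmetric case). Then every vertical line meets $\tilde h$, the vertices $x, y$ remain above $\tilde h$ at their own $x$-coordinates (where $\tilde h = h$), and $xy$ does not cross $\eta$. Observation \ref{paircrossing}, applied in its ``above'' version by symmetry, then forces the number of crossings of $xy$ with $\tilde h$, and hence with $h$, to be even; simplicity of $G$ makes this number at most one, hence zero. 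The difficulty this bypasses is that when $xy$ is itself circular the $x$-ranges of $xy$ and $h$ are not nested on the cylinder, so Observation \ref{paircrossing} cannot be applied to $h$ directly.

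Finally I would exhibit the family $\mathcal H$ by examining the three cases of ``proper'' for $e_{s-1} \below e_s$. In cases (a) and (b) the short arcs of $e_{s-1}$ and $e_s$ are disjoint, so $\mathcal H := \{e_{s-1}, e_s\}$ already covers $\mathcal C$. The delicate case is (c), where ``proper'' supplies a third edge $g \in E(G)$ whose endpoints lie in $V'$ strictly to the left of all endpoints of $e_{s-1}$ and $e_s$, with the endpoints of $e_{s-1}$ below $g$ and of $e_s$ above $g$. Because $g$'s endpoints are leftmost, the short arc of $g$ lies inside the $x$-ranges of both $e_{s-1}$ and $e_s$, so $\mathcal H := \{e_{s-1}, e_s, g\}$ covers $\mathcal C$. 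A second application of the sub-lemma (with $h = g$ and $xy = e_{s-1}$ or $e_s$) shows $g$ does not cross either $e_{s-1}$ or $e_s$, and the endpoint inequalities then promote to $e_{s-1} \below g \below e_s$ on every shared vertical line; the transitivity argument of the second paragraph then gives $u_1, u_2 \ab g$ and $w_1, w_2 \below g$. With $\mathcal H$ in hand, the sub-lemma yields $u_1 u_2 \ab h \ab w_1 w_2$ on the $x$-range of each $h \in \mathcal H$; since these ranges together cover the common $x$-range of $u_1 u_2$ and $w_1 w_2$, the ordering holds throughout, forcing the two edges not to cross. The main obstacle is precisely case (c): the short arcs of $e_{s-1}$ and $e_s$ can overlap so that neither edge alone separates $U$ from $W$ everywhere on $\mathcal C$, and the auxiliary edge $g$ furnished by the properness of $\mathcal E$ is exactly what fills this gap.
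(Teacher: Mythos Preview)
Your central sub-lemma is false, and this is where the argument breaks. The assertion that $x,y\ab h$ for a single circular flag-edge $h$ forces $xy$ and $h$ to be disjoint is exactly what Claim~\ref{impossiblecrossing} refutes: in a flag, with $i_1<i_2<i_3<i_4$ and $v_{i_3},v_{i_4}\ab v_{i_1}v_{i_2}$, the two edges cross whenever $v_{i_2}\ab v_{i_3}v_{i_4}$ and $v_{i_1}\below v_{i_3}v_{i_4}$ (the configuration of Figure~\ref{impossiblea}, reflected). Nothing in your hypotheses excludes this: knowing $x,y\ab h$ says nothing about where the endpoints of $h$ sit relative to $xy$. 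A four-vertex flag already realises the bad configuration, so the sub-lemma fails even for edges $xy$ whose endpoints lie to the right of all of $V'$.

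The step that fails concretely is the construction of $\eta$. You must run $\eta$ from one endpoint of $h$ to the other across the short arc $(h_1,h_2)$, and those endpoints are \emph{pinned}. When $xy$ is circular, $xy^-$ traverses all of $(h_1,h_2)$; in the crossing configuration above, one endpoint of $h$ lies below $xy^-$ and the other lies above it. Any $x$-monotone arc joining them therefore crosses $xy^-$, so you cannot choose $\eta$ disjoint from $xy$, and the parity count via Observation~\ref{paircrossing} yields nothing. The phrase ``choosing $\eta$ with $y$-coordinate strictly above every point of $xy$ in that range'' is simply not achievable here.

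What the paper does is fundamentally different: it never argues with a single barrier $h$. To show that an edge $f$ of $G[U]$ avoids $e_{s-1}$, the proof invokes \emph{actual edges of $G$} joining endpoints of $e_{s-1}$ and $e_{s}$ (and, in cases (2)--(3), the vertex $t$ of the properness edge $g$): edges such as $bc$, $ac$, $ta$, $cb$. Each of these, by simplicity, meets $f$ at most once, so Observation~\ref{basics} pins $f^-$ to one of two positions relative to each of them, and the case analysis shows every combination forces $f$ away from $e_{s-1}$. Your extension $\eta$ is not an edge of $G$, so simplicity gives no control over how often it meets $f$; that is precisely why the auxiliary edges --- and hence the simultaneous use of $e_{s-1}$, $e_s$, and $g$ --- cannot be bypassed.
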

We note that for the proof of Claim \ref{inductionstep} it is crucial to use the fact that $G[V']$ is a flag. In fact, if $G[V']$ did not have all the properties of a flag, the claim would not be true. 
The proof of this claim consists of a different case analysis showing that in all cases the specified edges are indeed disjoint. It is in some of the cases that the hypotheses of Theorem \ref{flagstrong} are needed, namely that the set of disjoint edges given by Theorem \ref{flagstrong} is proper. 

Note that in this section we go back to working with simple monotone cylindrical drawings. In other words, the drawings are not necessarily flags. The following observation will be useful during the proof of Claim \ref{inductionstep}, it essentially shows that for some type of circular edges, there are two different ways the edges can cross. 
\begin{observation}\label{basics}
Let $e=v_{i_1}v_{i_2}$ and $f=v_{i_3}v_{i_4}$ be two circular edges of $G$ such that $i_1< i_2< i_3< i_4$ and such that $v_{i_3},v_{i_4}\below e$. Then, if $e$ and $f$ are disjoint then $f\below e$ and hence $v_{i_1}\ab f$ and  $v_{i_2} \ab f$. 
 \\ If $e$ and $f$ cross, then one of the following is true, as can be seen in Figure \ref{wrongintersection}.
\\(1)  $v_{i_1}\ab f$ and $v_{i_2}\below  f$, hence $f^-\below e^-$.
\\(2) $v_{i_1} \below f$ and $v_{i_2} \ab f$, hence $f^-\below e^+$, .
\end{observation}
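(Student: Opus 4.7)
The plan is a case analysis patterned after the proof of Claim~\ref{impossiblecrossing}, but avoiding Observation~\ref{trivial} (which is specific to flags) by exploiting the fact that both $e$ and $f$ are circular. Because $e$ is circular and $x$-monotone, its $x$-range on $\mathcal C$ is $[(v_{i_2})_x,1)\cup[0,(v_{i_1})_x]$, split into $e^+$ and $e^-$ by $l_{x=0}$; similarly $f$ has $x$-range $[(v_{i_4})_x,1)\cup[0,(v_{i_3})_x]$. The $x$-coordinates on which both edges are simultaneously defined form two disjoint connected arcs of the cylinder: a ``wrap-around'' arc $A_1:=[(v_{i_4})_x,1)\cup[0,(v_{i_1})_x]$ (where $e^+,f^+$ are in play and, after wrapping through $x=0$, $e^-,f^-$ are), and a middle arc $A_2:=[(v_{i_2})_x,(v_{i_3})_x]$ (where $e^+$ and $f^-$ meet).

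The hypothesis $v_{i_3},v_{i_4}\below e$ says that at $x=(v_{i_4})_x$ and at $x=(v_{i_3})_x$ the curve $f$ is strictly below $e$. So, entering each of $A_1,A_2$ from its ``$f$-endpoint'' side, one has $f\below e$ there. Since $G$ is simple, $e$ and $f$ cross at most once, so by continuity their relative $y$-order can flip in at most one of the two arcs. This at once handles the disjoint case: if $e,f$ do not cross, then $f\below e$ throughout both arcs, and evaluating at $x=(v_{i_1})_x$ (end of $A_1$) and $x=(v_{i_2})_x$ (end of $A_2$) yields $v_{i_1},v_{i_2}\ab f$, as required.

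If $e$ and $f$ cross exactly once, two subcases remain depending on which arc contains the crossing. If the crossing lies in $A_2$ (i.e.\ between $e^+$ and $f^-$), then $f$ has flipped above $e$ by $x=(v_{i_2})_x$, giving $v_{i_2}\below f$, while $f\below e$ persists throughout $A_1$, giving $v_{i_1}\ab f$ and in particular $f^-\below e^-$ on $[0,(v_{i_1})_x]$; this is case~(1). Symmetrically, if the crossing lies in $A_1$ (between the matching $+$ or $-$ halves), then $f$ still lies below $e$ throughout $A_2$, so $v_{i_2}\ab f$ and $f^-\below e^+$ on $[(v_{i_2})_x,(v_{i_3})_x]$, while at $x=(v_{i_1})_x$ the order has flipped, giving $v_{i_1}\below f$; this is case~(2).

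The main obstacle is the cylindrical bookkeeping, not the argument itself: one must confirm that $A_1$ really is a single connected arc (wrapping through $x=0$) on which the ``at most one crossing'' restriction applies, that the gap $((v_{i_1})_x,(v_{i_2})_x)$ of $e$ correctly partitions the shared domain as $A_1\sqcup A_2$, and that the sign of the $y$-ordering at the $f$-endpoints propagates along each arc by the intermediate value theorem. Once that setup is in place, the listed conclusions about which of $f^-\below e^-$ or $f^-\below e^+$ holds follow simply by intersecting $A_1$ and $A_2$ with the $x$-ranges of the appropriate pieces of $e$ and $f$.
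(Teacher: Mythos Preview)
Your argument is correct. The paper does not actually prove Observation~\ref{basics}: it is stated as an observation and justified only by reference to Figure~\ref{wrongintersection}, leaving the verification to the reader. Your two-arc decomposition $A_1=[(v_{i_4})_x,1)\cup[0,(v_{i_1})_x]$ and $A_2=[(v_{i_2})_x,(v_{i_3})_x]$ of the common $x$-domain, together with the intermediate value argument tracking the single possible sign change of $e(x)-f(x)$, is exactly the right way to make the picture rigorous, and it correctly identifies case~(1) with a crossing in $A_2$ (between $e^+$ and $f^-$) and case~(2) with a crossing in $A_1$ (between $e^+,f^+$ or between $e^-,f^-$). The only minor cosmetic point is that in your description of $A_1$ you might emphasise that both curves, being circular, pass continuously through $x=0$, so that $e(x)-f(x)$ really is continuous on the whole wrap-around arc; you already flag this in your final paragraph, so nothing is missing.
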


A similar statement for the previous observation when $v_{i_3},v_{i_4}\ab e$ has been omitted for presentation reasons, but it will be used in this section.

\subsubsection*{Proof of Claim \ref{inductionstep}}

The proof of Claim \ref{inductionstep} is a detailed case analysis that uses Observation \ref{basics} extensively. However, one must draw some edges from $G[V']$ in order to force the corresponding edges to be disjoint. 

Observation \ref{paircrossing},
Observation \ref{trivial} and Observation \ref{basics} will be used, sometimes without mention, throughout this proof. It will also be widely used that all the vertices in $V-V'$ have greater $x$-coordinate than all of the vertices of $V'$, and that the graph is simple, that is any two edges intersect at most once, at a crossing or at a common endpoint.

We claim that in order to prove the claim it is enough to show that no edge of $G[W]$ intersects $e_s$ and that no edge of $G[U]$ intersects $e_{s-1}$. Indeed, let $e=vu$ be an edge in $G[W]$ crossing an edge $f=wz$ in $G[U]$. Assume first that $e$ and $f$ are both circular. 
Let $e'$ and $f'$ be extensions of $e,f$, respectively, such that $e'-e$ is below $e_{s-1}$ and $f'-f$ is above $e_s$. Then because $e$ and $f$ cross then $e'$ and $f'$ must cross an even number of times, but clearly $e'-e$ and $f'-f$ are disjoint, and hence either $e$ crosses $f'-f$ or $f$ crosses $e'-e$. It follows that either $e$ must cross $e_s$ or that $f$ must cross $e_{s-1}$. 
Note that if one of the edges from $e$ and $f$ is not circular, say $e$, then it follows with the same reasoning as above that $f$ must cross $e_{s-1}$. If neither of the edges is circular it follows that $e$ and $f$ are disjoint, since $e$ will be below $e_{s-1}$ and $f$ will be above $e_{s}$.

Let $e$ be an edge in $G[W]$ and $f$ be an edge in $G[U]$, we now show that $e$ does not cross $e_s$ and that $f$ does not cross $e_{s-1}$. We assume that both $e$ and $f$ are circular, for otherwise it is easy to see (for example, by Observation \ref{paircrossing}) that $e$ does not cross $e_{s-1}$ and $f$ does not cross $e_s$.

Let $a,b$ denote the endpoints of $e_{s-1}$ and $c,d$ the endpoints of $e_s$ such that $a_x< b_x$ and $c_x< d_x$ and assume without loss of generality that $a_x< c_x$. Then one of the following cases must be true (refer to Figure \ref{maintheoremfig}):
\\ (1). $a_x<b_x< c_x<d_x$
\\ (2). $a_x< c_x< b_x< d_x$
\\ (3). $a_x< c_x< d_x< b_x$

We break the analysis into these three different cases.

\textbf{(1). Assume $a_x<b_x< c_x<d_x$:}

By Observatin \ref{trivial} $bc\ab ab$ and $bc \below cd$.\\
We now show that $e$ does not cross $e_s$.
By Observation \ref{basics} it follows that $e^-$ is either below $(bc)^-$ or $(bc)^+$.
If $e^-$ is below $(bc)^-$ then $e^-$ does not cross $(e_{s-1})^+$ and hence $e$ does not cross $bc$ and therefore $e$ does not cross $e_s$. If $e^-$ is below $(bc)^+$ it also follows that $e$ does not cross $e_s$.

We now show that $f$ does not cross $e_{s-1}$.
By Observation \ref{basics} it follows that $f^-$ is either above $(bc)^+$ or $(bc)^-$. 
If it is above $(bc)^+$ then it also must be above $e_{s}^-$ hence $f$ and $e_{s-1}$ do not cross. If it is above $(bc)^-$ it follows, directly, that $f$ does not cross $e_{s-1}$. 

For both (2) and (3) we use the fact that $\mathcal E$ is a set of disjoint edges is proper. In both cases $e_{s-1}$ does not have both endpoints to the left of both endpoints of $e_s$ nor viceversa, then it follows that there is a circular edge $g$ with endpoints in $V'$ such that $a,b\below g$ and $c,d\ab g$ and $g$ has endpoints to the left of both $e_{s-1}$ and $e_s$.  From Claim \ref{impossiblecrossing} it follows that $g$ does not intersect $(e_{s-1})^+$ and $g$ does not intersect $(e_s)^+$. However, $g$ may intersect $e_{s-1}$ or $e_s$, and hence we draw it, in Figure \ref{maintheoremfig}, as a dotted line, and we do not draw one of its endpoints. 
Let $t$ be the vertex of $g$ with smallest $x$-coordinate then it follows because $G[V']$ is a flag that we can apply Claim \ref{impossiblecrossing} and get that $ab\below t\below cd$ .

\textbf{(2). Assume $a_x< c_x< b_x< d_x$:}

 First we will show that $ab\below ac\below cd$.  Because $a\below cd$ it follows that $ac\below cd$. Recall that $a\below g$ hence it follows that $ta\below g$. Since $g\below c,d$ and $ta$ and $c,d$ are related it follows that $ta\below c,d$. This together with the fact that $t,a\below cd$ and that $ab \below t$ implies $ab\below ta\below cd$. Therefore $ac\ab ta$ and hence $ac\ab ab$ (i.e. $ab \below ac$). 

We now show that $e$ does not cross $e_s$.
If $e^-$ is below $(ac)^+$ then $e$ does not cross $e_s$. Assume $e^-$ is below $(ac)^-$, this implies that $e^-$ does not cross $(ta)^+$ and hence $e^-$ is below $(ta)^+$ which implies that $e$ does not cross $e^s$.  

We now show that $f$ does not cross $e_{s-1}$.
If $f^-$ is above $(ac)^+$ then $f$ and $e_{s-1}$ do not cross. Assume that $f^-$ is above $(ac)^-$. Note that $cb$ is below $ac$ and that $cb$ is also below $ab$. If $f^-$ is above $(cb)^-$ then $f^-$ is also above $(cd)^-$ and hence $f$ does not intersect $ac$ and therefore $f$ and $e_{s-1}$ are disjoint. Then assume that $f^-$ is above $(cb)^+$, then it is also above $(ab)^+$ and since we had assumed that $f^-\ab (ac)^-$ then $f^-\ab (ab)^-$. It follows that $f$ and $e_{s-1}$ are disjoint. 

\textbf{(3). Assume $a_x< c_x< d_x< b_x$ :}

 In this case it is easy to see that $ta$ is below $g$ which implies, because $c,d\ab g$ and $t,a\below cd$ that $ta\below cd$. Clearly $ta$ is above $ab$.  Therefore $ac$ is above $ta$ which implies that $ac$ is above $ab$. It is clear that $ac\below cd$. 

We now show that $e$ does not cross $e_s$.
Assume first that $e^-$ is below $e_s^-$. Then $e^-$ must also be below $(ac)^+$ and hence $e$ and $e^s$ are disjoint. 

Hence assume that $e^-$ is below $e_s^+$. If $e^-$ is either below $(ac)^+$ or $(ta)^+$ it follows that $e$ and $e_s$ are disjoint. Note however, that $e^-$ must be either below $(ac)^+$ or $(ta)^+$. Hence $e$ and $e_s$ are disjoint. 

We now show that $f$ does not cross $e_{s-1}$.

Assume that $f^-$ is above $e_{s-1}^-$. It follows that $f^-$ is also above $(ac)^-$ from which follows that it must also be above $(ta)^+$ and hence $f$ and $e_{s-1}$ are disjoint. 

Hence assume $f^-$ is above $e_{s-1}^+$. Note that $f^-$ must be either above $(ac)^+$ or above $(ta)^+$. In both cases it follows that $f$ and $e_{s-1}$ are disjoint.

\begin{figure}
        \centering
        \begin{subfigure}[b]{1\textwidth}
                \includegraphics[width=\textwidth]{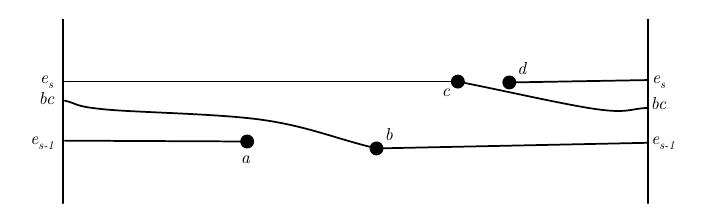}
                \caption{Case (1) in the proof of Claim \ref{inductionstep}}
                \label{maintheoa}
        \end{subfigure}%
        
        ~ 
        \begin{subfigure}[b]{1\textwidth}
                \includegraphics[width=\textwidth]{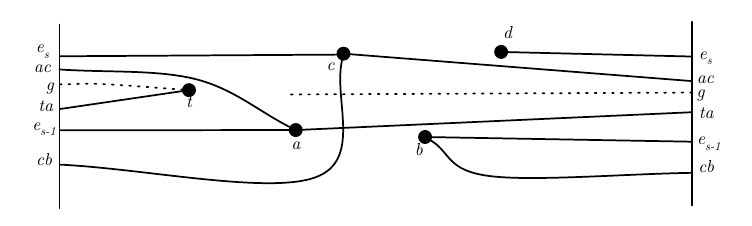}
                \caption{Case (2) in the proof of Claim \ref{inductionstep}}
                \label{maintheob}
        \end{subfigure}
        
        \begin{subfigure}[b]{1\textwidth}
                \includegraphics[width=\textwidth]{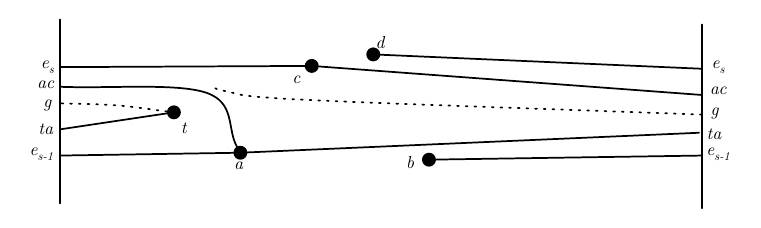}
                \caption{Case (3) in the proof of Claim \ref{inductionstep}}
                \label{maintheoc}
        \end{subfigure}
\caption{Figures for the proof of Claim \ref{inductionstep}}
        ~ 
\label{maintheoremfig}
\end{figure}

This concludes the proof of Claim \ref{inductionstep}.

\begin{claim}\label{verticesbetween} If
 $|V_i\cup V_{i+1}|\leq \frac{n}{10f(n)}$ for some $i$ such that $1\leq i\leq \alpha$, then we can find $f(n)$ pairwise disjoint edges in $G$. 
\end{claim}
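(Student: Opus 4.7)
The plan is to use Claim \ref{inductionstep} to split $G$ into two pieces whose induced subgraphs are edge-disjoint and both edge-disjoint from $e_i$, apply the induction hypothesis to each piece, and add $e_i$ as a single extra edge.

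First I would invoke Claim \ref{inductionstep} twice. Taking $s=i$ yields that $G[W_i]$ and $G[U_i]$ are edge-disjoint and that $e_i$ is disjoint from every edge of $G[W_i]$, where $W_i:=\bigcup_{j<i}V_j$ and $U_i:=\bigcup_{j>i}V_j$. Taking $s=i+1$ yields that $e_i$ is disjoint from every edge of $G[U_{i+1}]$, where $U_{i+1}:=\bigcup_{j>i+1}V_j$. Since $U_{i+1}\subseteq U_i$, the edges of $G[W_i]$ and $G[U_{i+1}]$ are pairwise disjoint as well. Thus the union of a set of pairwise disjoint edges in $G[W_i]$, a set of pairwise disjoint edges in $G[U_{i+1}]$, and the single edge $e_i$ will form a pairwise disjoint edge set in $G$. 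The boundary cases $i=1$ (where $W_1=\emptyset$) and $i=\alpha$ (where $U_{\alpha+1}=\emptyset$) use only the nonempty side together with $e_1$ or $e_\alpha$, respectively, and require only the one applicable invocation of Claim \ref{inductionstep} (namely $s=2$ or $s=\alpha$).

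Applying the induction hypothesis to $G[W_i]$ and $G[U_{i+1}]$ will give at least $\lceil f(|W_i|)\rceil + \lceil f(|U_{i+1}|)\rceil + 1$ pairwise disjoint edges. The crucial size estimate is
\[|W_i| + |U_{i+1}| \;=\; n - k - |V_i\cup V_{i+1}| \;\geq\; n - \frac{n}{5f(n)},\]
since both $k=\lfloor n/(10f(n))\rfloor$ and $|V_i\cup V_{i+1}|$ are at most $n/(10f(n))$. I would then use the concavity of $f$ via \eqref{aboutf} with $x=0$ to get $f(|W_i|) + f(|U_{i+1}|) \geq f(|W_i| + |U_{i+1}|)$, and then apply the first-order estimate \eqref{aboutf2} together with the explicit formula $f'(y) = c(1-\epsilon)y^{-\epsilon}$ to bound the loss $f(n) - f(n - n/(5f(n)))$ by a small constant well below $1$. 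The total will then comfortably exceed $\lceil f(n)\rceil$.

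The main obstacle I anticipate is not conceptual but bookkeeping: verifying that the two integer ceilings together with the $+1$ from $e_i$ add up to at least $\lceil f(n)\rceil$, and ensuring the argument still goes through in the extreme boundary cases where $|W_i|$ or $|U_{i+1}|$ may be very small (below $n_0$) so that the induction gives only the trivial bound of $1$. The parameters $n_0$, $c$, and the inequalities \eqref{equ2}, \eqref{equ3}, \eqref{aboutf}, and \eqref{aboutf2} are calibrated exactly for this step, so once the splitting is in place the calculation should go through cleanly.
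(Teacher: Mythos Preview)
Your proposal is correct and follows essentially the same approach as the paper: define $V^-=\bigcup_{j<i}V_j$ and $V^+=\bigcup_{j>i+1}V_j$, use Claim~\ref{inductionstep} (with $s=i$ and $s=i+1$) to see that $e_i$, $G[V^-]$, and $G[V^+]$ are mutually edge-disjoint, apply induction, and bound $f(|V^-|)+f(|V^+|)+1$ from below using subadditivity of $f$ and the derivative estimate \eqref{aboutf2}. Your treatment of the boundary cases $i=1$ and $i=\alpha$ and of the two separate invocations of Claim~\ref{inductionstep} is in fact slightly more explicit than the paper's.
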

For the proof of Claim \ref{verticesbetween} suppose that $|V_i\cup V_{i+1}|\leq \frac{n}{10f(n)}$ for some $i$ such that $1\leq i\leq \alpha$. Let $V^-=\bigcup_{j<i} V_j$ and $V^+=\bigcup_{j>i+1}V_j$. Then, by Claim \ref{inductionstep}, the edge $e_{i}$ is disjoint from all edges in $G[V^-]$ and $G[V^+]$. Furthermore, the edges in $G[V^-]$ are disjoint from the edges in $G[V^+]$. Hence we can find a set of $ f(|V^-|)+f(|V^+|)+1 $ pairwise disjoint edges. Since $f(n)>1$ it follows that $\frac{n}{5f(n)}\leq n/5$, using this and Equation \eqref{aboutf2} we have that
  
\begin{equation*}
\begin{split}
 f(|V^-|)+f(|V^+|)+1  &\geq   f(|V|-k-|V_i|-|V_{i+1}|)+1\geq  f\left (n- \frac{n}{5f(n)}\right )+1\\
 &\geq f(n)-f'\left (n- \frac{n}{5f(n)}\right )\frac{n}{5f(n)}+1\\
 &\geq f(n)-f'(n/2)\frac{n}{5f(n)}+1\\
 &= f(n) -(1-\epsilon)c \frac{2^{\epsilon}}{n^{\epsilon}}\frac{n}{5cn^{1-\epsilon}} +1
 \\ &= f(n)- (1-\epsilon)\frac{ 2^\epsilon}{5} +1 \geq f(n).
 \end{split}
\end{equation*} 



 
This concludes the proof of Claim \ref{verticesbetween}.

Order the $V_i$ by size, i.e., let $j_1,...,j_{\alpha+1}$ be a permutation of $1,...,\alpha+1$ such that $|V_{j_1}|\leq |V_{j_2}|\leq....\leq |V_{j_{\alpha+1}}|$. Let $\mathcal I=\{i| \frac{n}{20f(n)}\leq |V_{j_i}|\leq 10^5f(n)\}$. Note that $\mathcal I$ consists of a set of consecutive integers and recall that $|V_{j_i}|\leq |V_{j_{i+1}}|$. We now show that $|\mathcal I |\geq 10 \lceil \log n\rceil +1$. 

 By Claim \ref{verticesbetween} we may assume that there at least 
 \[ 
 \left \lfloor \frac{\alpha+1}{2}\right \rfloor \geq \frac{\alpha}{2} \geq  \frac{\lfloor n/(10f(n)) \rfloor}{50}\geq  
 \frac{ n/(10f(n)) -1}{50} \] 
 
 sets in 
$\{V_{j_1},...,V_{j_{\alpha +1}} \}$ with more than $\frac{n}{20f(n)}$ vertices. On the other hand the number of these sets having at least $10^5f(n)$ vertices is at most $\frac{n}{10^5f(n)}\geq 1$.  Hence, 
\begin{equation*}
\begin{split}
|\mathcal I| &\geq \left \lfloor \frac{\alpha+1}{2}\right \rfloor -\frac{n}{10^5f(n)}
\geq  \frac{ n/(10f(n)) -1}{50} -\frac{n}{10^5f(n)}
\\ & = \frac{n} {500f(n)} -\frac{n}{10^5f(n)} -\frac{1}{50} \geq \frac{n} {500f(n)} -2\frac{n}{10^5f(n)}
\\&  \geq \frac{n}{1000f(n) }=\frac{n^\epsilon }{1000c }
\\ &\geq 10 \lceil \log n\rceil +1  
\end{split}
\end{equation*}
where the last inequality follows by Equation \eqref{equ3} and the fact that $c\leq 1/10^6$. 

\begin{claim} \label{anl}There is an $l\in \mathcal I$ such that $j_l,j_{l+1},j_{l+2}\in \mathcal I$ and $|V_{j_{l+2}}|\leq 2|V_{j_l}|$.\end{claim}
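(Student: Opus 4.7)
The plan is to prove Claim \ref{anl} by contradiction. I read the statement's hypothesis ``$j_l,j_{l+1},j_{l+2}\in\mathcal{I}$'' as a typo for ``$l,l+1,l+2\in\mathcal{I}$'', since $\mathcal{I}$ is a set of indices rather than a set of values $j_i$. So suppose instead that for every triple $l,l+1,l+2\in\mathcal{I}$ one has $|V_{j_{l+2}}| > 2|V_{j_l}|$. The strategy is to exploit this as forced exponential growth of $|V_{j_i}|$ along $\mathcal{I}$ and to show that such growth exhausts the size-window defining $\mathcal{I}$ long before the $\Theta(\log n)$ length of $\mathcal{I}$ is used up.

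First I would recall that $\mathcal{I}$ is an interval of integers, say $\mathcal{I}=\{a,a+1,\ldots,b\}$, because $i\mapsto |V_{j_i}|$ is nondecreasing, so the indices satisfying both bounds in the definition of $\mathcal{I}$ form a contiguous block (this was already noted just before the claim). Iterating the contradiction hypothesis yields
\[ |V_{j_{a+2k}}| \;>\; 2^k\,|V_{j_a}| \qquad \text{for every } k \text{ with } a+2k\le b, \]
and in particular for $k=\lfloor (|\mathcal{I}|-1)/2\rfloor$. Combining this with the two defining inequalities of $\mathcal{I}$, namely $|V_{j_a}|\ge n/(20f(n))$ and $|V_{j_{a+2k}}|\le 10^5 f(n)$, I obtain
\[ 2^{\lfloor (|\mathcal{I}|-1)/2\rfloor} \;\le\; \frac{2\cdot 10^6\, f(n)^2}{n} \;=\; 2\cdot 10^6\, c^2\, n^{1-2\epsilon} \;\le\; 2\cdot 10^6\, c^2\, n. \]

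Taking $\log_2$ of both sides forces $|\mathcal{I}| \le 2\log_2 n + O(1)$, whereas the paragraph preceding the claim established $|\mathcal{I}|\ge 10\lceil \log n\rceil +1$. The constant $10$ on the left comfortably dominates the $2$ on the right, and the fact that $c\le 10^{-6}$ gives ample slack in the additive constant coming from $2\cdot 10^6$; so for $n>n_0$, the choice of $n_0$ enforced by \eqref{equ3} closes the contradiction. The argument is essentially a pigeonhole on geometric growth; there is no real obstacle beyond bookkeeping the constants and confirming that the threshold $10\lceil \log n\rceil + 1$ was designed precisely to make this comparison go through.
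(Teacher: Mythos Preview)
Your proof is correct and follows essentially the same approach as the paper: assume doubling fails everywhere on $\mathcal I$, iterate to get exponential growth of $|V_{j_i}|$ along $\mathcal I$, and contradict the lower bound $|\mathcal I|\ge 10\lceil\log n\rceil+1$. The only cosmetic difference is that the paper bounds the last term by $n$ (since trivially $|V_{j_a}|\le n$) rather than by the upper endpoint $10^5 f(n)$ of the $\mathcal I$-window, obtaining $|V_{j_a}|>2^{\lfloor|\mathcal I|/2\rfloor}\cdot n/(20f(n))\ge n^5/(20f(n))>n$ directly; your route via both endpoints of the window is equally valid and your reading of the typo is the intended one.
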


Assume that the statement is not true. Then $|V_{j_{l+2}}|> 2|V_{j_l}|$ for all $l$ with $l,l+2\in \mathcal I$. That means that the size of every second set increases by at least a factor of two. Let $a$ be the last index in $\mathcal I$. Then

\[ |V_{j_a}|> 2^{\lfloor |\mathcal I|/2 \rfloor} \frac{n}{20f(n)} \geq 2^{4\log n} \frac{n}{20f(n)}=\frac{n^5}{20f(n)}>n\]
a contradiction.   Hence Claim \ref{anl} is proved.

Let $l$ be given as in Claim \ref{anl}, and let $z_1,z_2,z_3$ be the permutation of $j_{l},j_{l+1},j_{l+2}$ such that $z_1<z_2<z_3$. Let $U=\bigcup_{i>z_2} V_i$ and $W=\bigcup_{i<z_2} V_i$ and let $x=|V_{l}|$. Then it follows that $|U|,|W|>x$. On the other hand, we have $|U|+|W|= n-k-|V_{z_2}| \geq n-k-2x$ and $\frac{n}{20f(n)}\leq x\leq 10^5f(n)$. Recall that $k=\lfloor \frac{n}{10f(n)}\rfloor$ hence 
\[|U|+|W|\geq n-k-2x\geq n-4x.\]

By our induction hypotheses we can find a set of $f(|U|)$ pairwise disjoint edges in $G[U]$ and a set of $f(|W|)$ pairwise disjoint edges in $G[W]$.
By Claim \ref{inductionstep} the union of these two sets is a set of $f(|U|)+f(|W|)$ pairwise disjoint edges in $G$. 
Hence, from $|U|,|W|\geq x$, Equation \eqref{aboutf}, Equation \eqref{aboutf2} and since $5x\leq 5\cdot 10^5f(n)\leq \frac{1}{2} n^{1-\epsilon}\leq \frac{n}{2}$ it follows that 
\begin{equation}\label{first}
\begin{split}
f(|U|)+f(|W|) &\geq f(n-5x)+f(x)  \geq  f(n)-f'(n-5x)\cdot5x+f(x)
\\& \geq f(n)-f'(n/2)\cdot 5x +f(x)= f(n)-c(1-\epsilon)\frac{2^{\epsilon}}{n^{\epsilon}} \cdot5x +cx^{1-\epsilon}
\\ & =f(n) -cx^{1-\epsilon} (1-\epsilon)\frac{2^\epsilon}{n^\epsilon}\cdot 5x^\epsilon +cx^{1-\epsilon}
\end{split}
\end{equation} 
Recall that $x\leq 10^5f(n)$ and hence by Equation \eqref{equ2} it follows that 
\[(1-\epsilon)\frac{2^\epsilon}{n^\epsilon}\cdot 5x^\epsilon \leq (1-\epsilon)\frac{2^\epsilon}{n^\epsilon}\cdot 5(10^{5}cn^{1-\epsilon})^\epsilon= (1-\epsilon)\cdot2^\epsilon \cdot 5\cdot 10^{5\epsilon} c^\epsilon n^{-\epsilon ^2}\leq 1.\]
Putting this in Equation \eqref{first} it follows that $f(|U|)+f(|W|)\geq f(n)$ and the theorem is proved. 
\end{proof}

Note that although the proof of Theorem \ref{main2} is a bit long, the algorithmic part is very simple. Essentially one partitions $\mathcal C$ into $\mathcal P$ and either enough parts of $\mathcal P$ contain an edge inside and we  put these edges together to form a set of disjoint edges. Otherwise, there must be a part that induces a flag. Then Theorem \ref{flagstrong} is invoked,  the $V_i$'s are constructed, and a suitable place for induction is found. 

\textbf{Acknowledgments}. I would like to thank J\'anos Pach, Bartosz Walczak, Carsten Moldenhauer and Yuri Faenza for useful comments and discussion.

\bibliographystyle{plain}
\bibliography{refs}


\end{document}